\title[sextic variety]{Sextic variety as Galois closure variety of smooth cubic}
\author[Hisao Yoshihara]{}
\newtheorem{theorem}{Theorem}[section]
\newtheorem{lemma}[theorem]{Lemma}
\newtheorem{proposition}[theorem]{Proposition}
\newtheorem{corollary}[theorem]{Corollary}
\newtheorem{claim}[theorem]{Claim}
\newtheorem{rep}{Representation}
\theoremstyle{definition}
\newtheorem{definition}[theorem]{Definition}
\newtheorem{example}[theorem]{Example}
\theoremstyle{remark}
\newtheorem{remark}[theorem]{Remark}
\newenvironment{namelist}[1]{%
\begin{list}{}
  {
   \settowidth{\labelwidth}{#1}
   \setlength{\leftmargin}{2.5\labelwidth}}
}{%
\end{list}}
\begin{document}
\maketitle

\begin{center}

{\sc Hisao Yoshihara}\\
\medskip
{\small{\em Department of Mathematics, Faculty of Science, Niigata University,\\
Niigata 950-2181, Japan}\\
E-mail:{\tt yosihara@math.sc.niigata-u.ac.jp}}
\end{center}

\bigskip

\begin{abstract}
Let $V$ be a nonsingular projective algebraic variety of dimension $n$. Suppose there exists a very ample divisor $D$ such that 
$D^n=6$ and $\dim{\mathop{\mathrm{H^0}}\nolimits}(V,\ {\mathcal O}(D))=n+3$. Then, $(V, D)$ defines a $D_6$-Galois embedding if 
and only if it is a Galois closure variety of a smooth cubic in $\mathbb P^{n+1}$ with respect to a suitable projection center 
such that the pull back of hyperplane of $\mathbb P^n$ is linearly equivalent to $D$. 
\end{abstract}

\bigskip

\section{Introduction}
The purpose of this article is to generalize the following assertion (cf. \cite[Theorem 4.5]{y5}) to $n$-dimensional varieties.  

\begin{proposition}\label{27}
Let $C$ be a smooth sextic curve in ${\mathbb P}^3$ and assume the genus is four. 
If $C$ has a Galois line, then the group $G$ is isomorphic to the cyclic or dihedral group of order six. 
Moreover, $G$ is isomorphic to the latter one if and only if $C$ is obtained as the Galois closure  curve of a smooth plane cubic $E$ 
with respect to a point $P \in \mathbb P^2 \setminus E$, where $P$ does not lie on the tangent line to $E$ at any flex.  
\end{proposition}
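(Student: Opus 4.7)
The plan is to classify the Galois group via Riemann--Hurwitz and then, in the dihedral case, recover the plane cubic structure through the elliptic intermediate quotient.

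A Galois line $\ell$ (disjoint from $C$) yields a degree-$6$ Galois cover $\pi_\ell: C \to \mathbb{P}^1$, so $|G| = 6$, and hence $G \cong \mathbb{Z}/6\mathbb{Z}$ or $G \cong S_3 \cong D_6$; this proves the first assertion. For the forward direction of the iff, assume $G \cong S_3$, so cyclic inertia subgroups have orders in $\{1,2,3\}$. Applying Riemann--Hurwitz with $g(C) = 4$ yields $\sum_Q (e_Q - 1) = 18$; an order-$2$ branch point contributes $3$ and an order-$3$ branch point contributes $4$, so writing $(a,b)$ for the respective counts, the equation $3a + 4b = 18$ forces $(a,b) \in \{(6, 0), (2, 3)\}$. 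To exclude $(2,3)$, I would use the canonical embedding $C \hookrightarrow \mathbb{P}^3$: over an order-$3$ branch point, the fiber of $\pi_\ell$ is a divisor $3p + 3q$ cut by a plane through $\ell$ that osculates $C$ simultaneously at $p$ and at $q$; having three such doubly-osculating planes through a single line is incompatible with the rigidity of a smooth canonical sextic in $\mathbb{P}^3$, which lies on a unique quadric with strictly controlled osculating configuration.

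With $(a,b) = (6,0)$ in hand, fix an index-$3$ subgroup $H \leq S_3$ and set $E := C/H$; one has a tower $C \to E \to \mathbb{P}^1$ of degrees $2$ and $3$. An orbit analysis of $H$ on $S_3/I$ (for $|I| = 2$) shows each branch point of $\pi_\ell$ produces one simple ramification point in $E \to \mathbb{P}^1$, so this intermediate cover has $6$ simple branch points and Riemann--Hurwitz gives $g(E) = 1$. Thus $E$ is elliptic, and the induced degree-$3$ map $E \to \mathbb{P}^1$ is a base-point-free $g^1_3$ whose associated line bundle $L$ has $h^0(L) = 3$ by Riemann--Roch, embedding $E$ as a smooth plane cubic with this map realized as the projection $\pi_P$ from a point $P \in \mathbb{P}^2 \setminus E$. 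Since no branch point of $\pi_\ell$ has order-$3$ inertia, $P$ avoids every flex tangent, for otherwise a flex tangent through $P$ would force a ramification index of $3$. Finally, $C$ is the Galois closure of $\pi_P$, since both are Galois $S_3$-covers of $\mathbb{P}^1$ of degree $6$ containing the subfield $K(E)$.

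For the converse, starting from $(E, P)$ as in the statement, $\pi_P$ has only simple ramification (no flex tangent passes through $P$, and a smooth plane cubic admits no bitangent line), so its monodromy, being transitive and generated by transpositions, is the full $S_3$; the Galois closure $C$ is therefore an $S_3$-cover of $\mathbb{P}^1$ with $g(C) = 4$ by Riemann--Hurwitz on the resulting tower. All three order-$2$ involutions of $S_3$ on $C$ produce elliptic quotients, so $C$ is non-hyperelliptic, and its canonical model is a smooth sextic $C \subset \mathbb{P}^3$; a projection-formula calculation confirms $\pi^* \mathcal{O}_{\mathbb{P}^1}(1) \cong K_C$, so the pencil defining $\pi$ is a line in $|K_C|$ and corresponds to the desired Galois line $\ell \subset \mathbb{P}^3$. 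The principal obstacle in the argument is the geometric exclusion of the ramification configuration $(a, b) = (2, 3)$: Riemann--Hurwitz alone admits it, and the contradiction must be extracted from the canonical embedding, which is the least routine step.
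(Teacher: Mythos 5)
Your reduction to the branch counts $3a+4b=18$, hence $(a,b)\in\{(6,0),(2,3)\}$, is correct, and the skeleton of both directions of the equivalence is sound; but the step on which the forward implication actually turns --- excluding $(a,b)=(2,3)$ --- is not proved. The sentence about three ``doubly-osculating planes'' being ``incompatible with the rigidity of a smooth canonical sextic'' names no lemma and proves nothing, as you yourself concede. The gap is genuine because nothing numerical rules $(2,3)$ out: by Riemann existence there do exist $S_3$-covers $C\to\mathbb P^1$ of genus $4$ with two points of order-$2$ inertia and three of order-$3$ inertia (take branch cycles $(12),(12),(123),(123),(123)$), and for such a cover $C/\langle\tau\rangle$ has genus $2$ and $C/\langle\sigma\rangle$ is rational, so no plane cubic appears and the ``if and only if'' would fail; moreover the constraint $\pi^*\mathcal O(1)\sim K_C$ pins down the relevant divisor class only up to torsion, so the contradiction cannot be extracted by divisor-class bookkeeping either. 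It has to come from the projective representation of $G$ on $\mathbb P^3$, which your argument never sets up.

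That representation is exactly how the paper proceeds (Proposition \ref{27} itself is quoted from \cite{y5}; the same mechanism drives the proof of Theorem \ref{100} here). Normalizing $\beta$ of Representation \ref{18} so that $G$ acts by the identity on the $2$-dimensional subspace cutting the pencil of planes through $\ell$, the complementary $2$-dimensional representation is a faithful representation of $S_3$, necessarily the standard one; so in suitable coordinates $\beta(\sigma)=\mathrm{diag}(1,1,\omega,\omega^2)$ and $\beta(\tau)=\mathrm{diag}(1,1,1,-1)$. The fixed locus of $\sigma$ in $\mathbb P^3$ is the line $\{X_2=X_3=0\}$ together with two isolated points lying on $\ell$, and $\ell\cap C=\emptyset$, so all fixed points of $\sigma$ on $C$ are collinear; if $b=3$, then $\sigma$ fixes the six points in the fibres over the order-$3$ branch points, whereas a line meets $C$ in at most three points (here one uses, as your instinct suggested, that $C$ lies on a unique quadric $Q$: at most $2$ points if the line is not contained in $Q$, exactly $3$ if it is a ruling). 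This contradiction kills $(2,3)$; then $\mathrm{Fix}(\tau)\cap C$ is a transverse hyperplane section of six points, and $C/\langle\tau\rangle$ is elliptic and is realized as a smooth plane cubic by the projection from the isolated fixed point of $\tau$ on $\ell$ --- the paper's route, via Lemma \ref{49}. A secondary soft spot: in your converse, ``a projection-formula calculation confirms $\pi^*\mathcal O(1)\cong K_C$'' is not a formal computation. By Riemann--Roch it amounts to $h^0(\pi^*\mathcal O(1))=4$, i.e., the vanishing of a $2$-torsion class; it does hold, either by identifying the anti-invariant line bundle of the double cover $C\to E$ with $\mathcal O_E$ of a fibre of $\pi_P$, or, as in the paper's Theorem \ref{101}, by constructing the closure explicitly as a smooth $(2,3)$ complete intersection in $\mathbb P^3$, on which the planes through $\ell$ visibly cut the covering pencil.
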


Before going into the details, we recall the definition of Galois embeddings of algebraic varieties and the relevant results. 
In this article a variety, a surface and a curve will mean a nonsingular projective algebraic variety, surface and curve, 
respectively. 

Let $k$ be the ground field of our discussion, we assume it to be an algebraically closed field of characteristic zero.  
Let $V$ be a variety of dimension $n$ with a very ample divisor $D$; we 
denote this by a pair $(V, D)$. Let $f=f_D:V \hookrightarrow {\mathbb P}^N$ be the embedding of $V$ associated
 with the complete linear system $|D|$, where $N+1=\dim{\mathop{\mathrm{H^0}}\nolimits}(V,\ {\mathcal O}(D))$. Suppose $W$ is a linear
 subvariety of ${\mathbb P}^N$ satisfying $\dim W=N-n-1$ and $W \cap f(V)=\emptyset$. Consider the projection $\pi_W$ 
from $W$ to $\mathbb P^n$, i.~ e., $\pi_W : {\mathbb P}^N \dashrightarrow \mathbb P^n$. Restricting $\pi_W$ onto $f(V)$, we get a surjective morphism $\pi=\pi_W \cdot f : V \longrightarrow \mathbb P^n$. 

Let $K=k(V)$ and $K_0=k(\mathbb P^n)$ be the function fields of $V$ and $\mathbb P^n$ respectively. The covering map $\pi$ 
induces a finite extension of fields ${\pi}^* : K_0 \hookrightarrow K$ of degree $\deg f(V)=D^n$, which is the
 self-intersection number of $D$. 
We denote by $K_W$ the Galois closure of this extension and by $G_W=Gal(K_W/K_0)$ the Galois group of $K_W/K_0$. 
By \cite{ha} $G_W$ is isomorphic to the monodromy group of the covering $\pi : V \longrightarrow \mathbb P^n$.
Let $V_W$ be the $K_W$-normalization of $V$ (cf. \cite[Ch.2]{ii}). 
Note that $V_W$ is determined uniquely by $V$ and $W$. 

\begin{definition}\label{1}
In the above situation we call $G_W$ and $V_W$ the Galois group and the Galois closure variety at $W$ 
respectively (cf. \cite{y6}). 
If the extension $K/K_0$ is Galois, then we call $f$ and $W$ a Galois embedding and a Galois subspace for
 the embedding respectively. 
\end{definition}

\begin{definition}\label{3}
A variety $V$ is said to have a Galois embedding if there exist a very ample 
divisor $D$ satisfying that the embedding associated with $|D|$ has a Galois subspace. In this case the pair $(V,D)$ 
is said to define a Galois embedding.
\end{definition}

If $W$ is a Galois subspace and $T$ is a projective transformation of ${\mathbb P}^N$, then $T(W)$ is a Galois
 subspace of the embedding $T \cdot f$. Therefore the existence of Galois subspace does not depend on the choice 
of the basis giving the embedding. 

\begin{remark}\label{31}
If a variety $V$ exists in a projective space, then by taking a linear subvariety, we can define a Galois subspace and Galois group 
similarly as above. 
Suppose $V$ is not normally embedded and there exists a linear subvariety $W$ such that the projection 
$\pi_W$ induces a Galois extension of fields. Then, taking $D$ as a hyperplane section of $V$ in the embedding,
 we infer readily that $(V,D)$ defines a Galois embedding with the same Galois group in the above sense.
\end{remark}

By this remark, for the study of Galois subspaces, it is sufficient to consider the case where $V$ is normally embedded. 

\medskip

We have studied Galois subspaces and Galois groups for hypersurfaces in \cite{y1}, \cite{y2} and \cite{y3} and space 
curves in \cite{y5} and \cite{y7}. 
The method introduced in \cite{y6} is a generalization of the ones used in these studies. 

\medskip

Hereafter we use the following notation and convention:

\begin{namelist}{}
\item[$\cdot$]${\rm Aut}(V)$ : the automorphism group of a variety $V$
\item[$\cdot$]$\langle a_1, \cdots, a_m \rangle$ : the subgroup generated by $a_1, \cdots, a_m$ 
\item[$\cdot$]$D_{2m}$ : the dihedral group of order $2m$
\item[$\cdot$]$|G|$ : the order of a group $G$
\item[$\cdot$]$\sim$ : the linear equivalence of divisors
\item[$\cdot$]${\bf 1}_m$ : the unit matrix of size $m$
\item[$\cdot$]$X \ast Y$ : the intersection cycle of cycles $X$ and $Y$ in a variety.
\item[$\cdot$]$(X_0: \cdots : X_m)$ : a set of homogeneous coordinates on ${\mathbb P}^m$
\item[$\cdot$]$g(C)$ : the genus of a smooth curve $C$
\item[$\cdot$] For a mapping $\varphi : X \longrightarrow Y$ and a subset $X' \subset X$, we often use the same $\varphi$ to denote the restriction $\varphi|_{X'}$. 
\end{namelist}

\bigskip

\section{Results on Galois embeddings}

We state several properties concerning Galois embedding without the proofs, for the details, see \cite{y6}. 
By definition, if $W$ is a Galois subspace, then each element $\sigma$ of $G_W$ is an automorphism of $K=K_W$ over $K_0$. Therefore it induces a birational transformation of $V$ over $\mathbb P^n$. This implies that $G_W$ can be viewed as a subgroup of ${\rm Bir}(V/\mathbb P^n)$, the group of birational transformations of $V$ over $\mathbb P^n$. Further we can say the following:

\begin{rep}\label{a1}
Each birational transformation belonging to $G_W$ turns out to be regular on $V$, hence we have a faithful representation 
$$
\alpha :  G_W \hookrightarrow {\rm Aut}(V). \eqno(1)
$$
\end{rep}

\medskip

\begin{remark}
Representation 1 is proved by using transcendental method in \cite{y6}, however we can prove it algebraically by making use of the 
results \cite[Ch. I, 5.3. Theorem 7]{sh} and \cite[Ch. V, Theorem 5.2]{har}. 
\end{remark}

\medskip

Therefore, if the order of ${\rm Aut}(V)$ is smaller than the degree $d$, then $(V,D)$ cannot define a Galois embedding. 
In particular, if ${\rm Aut}(V)$ is trivial, then $V$ has no Galois embedding. 
On the other hand, in case $V$ has an infinitely many automorphisms, we have examples such that there exist infinitely many distinct Galois embeddings, see Example 4.1 in \cite{y6}. 
\medskip

When $(V,D)$ defines a Galois embedding, we identify $f(V)$ with $V$. Let $H$ be a hyperplane of ${\mathbb P}^N$ containing $W$ and put 
$D'=V \ast H$. 
Since $D' \sim D$ and ${\sigma}^*(D')=D'$, for any $\sigma \in G_W$, we see $\sigma$ induces an automorphism of ${\mathop{\mathrm{H^0}}\nolimits}(V, {\mathcal O}(D))$. 
This implies the following. 

\begin{rep}\label{18}
We have a second faithful representation 

$$\beta : G_W \hookrightarrow PGL(N+1, k).\eqno(2)$$

\end{rep}

\medskip
In the case where $W$ is a Galois subspace we identify $\sigma \in G_W$ with $\beta(\sigma) \in PGL(N+1, k)$ hereafter. 
Since $G_W$ is a finite subgroup of ${\rm Aut}(V)$, we can consider the quotient $V/G_W$ and let $\pi_G$ be the quotient morphism, $\pi_G : V \longrightarrow V/G_W$.

\begin{proposition}\label{15}
If $(V,D)$ defines a Galois embedding with the Galois subspace $W$ such that the projection is $\pi_W : {\mathbb P}^N \dashrightarrow \mathbb P^n$, then there exists an isomorphism $g:V/G_W \longrightarrow \mathbb P^n$ satisfying $g \cdot \pi_G = \pi$. Hence the projection $\pi$ is a finite morphism and the fixed loci of $G_W$ consist of only divisors.
\end{proposition}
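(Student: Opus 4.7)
My plan is to build the morphism $g$ from the $G_W$-invariance of $\pi$, prove $\pi$ is finite via an ampleness argument, and then apply the standard fact that a finite birational morphism onto a normal variety is an isomorphism. First, since $\pi^*\colon K_0 \hookrightarrow K$ is Galois with group $G_W$, every $\sigma \in G_W$ fixes $K_0$ pointwise, so $\pi\circ\sigma = \pi$ as morphisms $V\to\mathbb{P}^n$. Consequently $\pi$ is $G_W$-invariant and factors uniquely through the geometric quotient, yielding a morphism $g\colon V/G_W \to \mathbb{P}^n$ with $g\circ\pi_G = \pi$. The function field $k(V/G_W) = K^{G_W} = K_0 = k(\mathbb{P}^n)$ shows $g$ is birational, and $V/G_W$ is normal and projective as a finite quotient of the smooth projective variety $V$.

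Next I would prove $\pi$ is finite. The projection $\pi$ is defined by the $(n+1)$-dimensional sub linear system of $|D|$ cut out on $V$ by the hyperplanes of $\mathbb{P}^N$ containing $W$, and this system is base-point-free precisely because $W \cap f(V) = \emptyset$. Hence $\pi^*\mathcal{O}_{\mathbb{P}^n}(1) \cong \mathcal{O}_V(D)$, which is very ample. A proper morphism whose pullback of an ample line bundle remains ample is necessarily finite, since on any positive-dimensional fiber the pullback would restrict to the trivial bundle, contradicting ampleness. Because $\pi_G$ is finite and surjective, the factorization $\pi = g\circ\pi_G$ then forces $g$ to be quasi-finite, and being proper $g$ is finite. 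A finite birational morphism with normal target is an isomorphism: $g_*\mathcal{O}_{V/G_W}$ is a finite $\mathcal{O}_{\mathbb{P}^n}$-subalgebra of $k(\mathbb{P}^n)$, hence coincides with $\mathcal{O}_{\mathbb{P}^n}$ by normality.

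Once $g$ is identified as an isomorphism, $\pi\colon V \to \mathbb{P}^n$ is a finite Galois covering of a smooth variety by a smooth variety, with ramification locus $\bigcup_{\sigma\neq 1}\mathrm{Fix}(\sigma)$. The Zariski--Nagata purity of the branch locus then implies every nonempty component of this fixed locus has pure codimension one, giving the claim about divisors. The step I expect to be most delicate is the finiteness of $\pi$: without the ampleness observation one would need Stein factorization or a direct analysis of fiber dimensions (and a separate argument ruling out small exceptional loci of $g$) to conclude, which is less transparent. The ampleness trick sidesteps all of that by producing $0$-dimensional fibers in one line, after which the remaining steps are routine.
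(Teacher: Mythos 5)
The paper states Proposition \ref{15} without proof, deferring to \cite{y6}, so there is no in-paper argument to compare against; judged on its own merits, your proof is correct and complete. The factorization of $\pi$ through the quotient, the birationality of $g$ via $k(V/G_W)=K^{G_W}=K_0$, the finiteness of $\pi$ from the ampleness of $\pi^*\mathcal{O}_{\mathbb{P}^n}(1)\cong\mathcal{O}_V(D)$ restricted to fibers, and the conclusion that a finite birational morphism onto the normal variety $\mathbb{P}^n$ is an isomorphism are all sound. (For finiteness there is an even more classical route you could have used: a fiber of $\pi$ over $y$ lies in the linear span $\langle W,y\rangle$, in which $W$ is a hyperplane, so a positive-dimensional fiber would have to meet $W$, contradicting $W\cap f(V)=\emptyset$; your ampleness trick is an equivalent one-liner.)

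One point deserves emphasis, and you got it right where it would have been easy to go wrong. Your purity argument establishes that the \emph{union} $\bigcup_{\sigma\neq 1}\mathrm{Fix}(\sigma)$, i.e.\ the ramification locus of $\pi$, is pure of codimension one, and this is the correct reading of ``the fixed loci of $G_W$ consist of only divisors.'' The elementwise statement would be false: in the paper's own $D_6$ situation (proof of Theorem \ref{100}), $\beta(\sigma)$ is conjugate to $\mathrm{diag}(1,\ldots,1,\omega,\omega^2)$, so the order-three element $\sigma$ fixes $V\cap\{X_{n+1}=X_{n+2}=0\}$, which has codimension two in $V$. Purity (equivalently, smoothness of the quotient via Chevalley--Shephard--Todd, since a stabilizer equal to $\langle\sigma\rangle$ acting with eigenvalues $\omega,\omega^2$ on the normal directions would produce an $A_2$-singularity on $V/G_W\cong\mathbb{P}^n$) only forces such small components to lie inside the divisorial fixed loci of the involutions. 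Because you applied Zariski--Nagata purity to the union rather than to each $\mathrm{Fix}(\sigma)$ separately, your argument lands exactly on the true statement.
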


Therefore, $\pi$ turns out to be a Galois covering in the sense of Namba \cite{na}.

\begin{lemma}\label{49}
Let $(V,D)$ be the pair in Proposition \ref{15}. 
Suppose $\tau \in G$ has the representation 
\[
\beta(\tau) =[1, \ldots, 1, e_m], \ (m \geq 2)
\]
where $e_m$ is an $m$-th root of unity. Let $p$ be the projection from $(0: \cdots :0:1) \in W$ to $\mathbb P^{N-1}$. 
Then, $V/\langle \tau \rangle$ is isomorphic to $p(V)$ if $p(V)$ is a normal variety. 
\end{lemma}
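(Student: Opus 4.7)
The plan is to realize the isomorphism as the unique factorization of $p|_V$ through the geometric quotient $\pi_\tau: V \to V/\langle\tau\rangle$, and then to upgrade the induced morphism $\bar{p}: V/\langle\tau\rangle \to p(V)$ to an isomorphism via Zariski's main theorem, using the normality hypothesis on $p(V)$.

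The factorization step is immediate from the shape of $\beta(\tau)$. Because $\beta(\tau) = \operatorname{diag}(1, \ldots, 1, e_m)$ fixes the center $c = (0:\cdots:0:1)$ and leaves the first $N$ coordinates unchanged, one has $p \circ \beta(\tau) = p$ on $\mathbb{P}^N \setminus \{c\}$; restricting to $V$, which avoids $c$ because $c \in W$ and $W \cap V = \emptyset$, yields $p|_V \circ \tau = p|_V$. The universal property of quotients by finite groups on projective varieties then produces the unique morphism $\bar{p}: V/\langle\tau\rangle \to p(V)$ satisfying $\bar{p} \circ \pi_\tau = p|_V$. It is surjective by surjectivity of $p|_V$, and finite because $p|_V$ is a finite morphism (its center misses $V$) and $\pi_\tau$ is itself finite, so $\bar{p}$ is projective with finite fibers.

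The crux of the argument is the birationality of $\bar{p}$, equivalent to $\deg(p|_V) = m$. A general fiber of $p|_V$ over $y \in p(V)$ is the intersection $V \cap \ell_y$, where $\ell_y$ denotes the line through $c$ and $y$. Since $\tau$ fixes $c$, the line $\ell_y$ is $\tau$-stable, and since the fixed locus of $\tau$ on $V$ is the proper divisor $V \cap \{X_N = 0\}$, the $\tau$-action on a general fiber is free; the fiber is therefore a union of $\tau$-orbits of size $m$, giving $m \mid \deg(p|_V)$. To establish equality, I would examine the subgroup $H = \operatorname{Gal}(k(V)/k(p(V)))$ of $G_W$, which contains $\langle\tau\rangle$. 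Any $\sigma \in H$ preserves each fiber of $p|_V$; since $V \cap \ell_y$ contains at least $m \geq 2$ points, $\beta(\sigma)$ must send $\ell_y$ to itself for every $y \in p(V)$, and this together with the non-degeneracy of $p(V) \subset \mathbb{P}^{N-1}$ forces $\beta(\sigma)$ to fix $c$ and act trivially on $\mathbb{P}^{N-1}$. Combining with the pseudo-reflection form of $\beta(\tau)$ and the finiteness of $G_W$, one concludes $H = \langle\tau\rangle$, so $\deg(p|_V) = m$ and $\bar{p}$ is birational.

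With $\bar{p}$ finite and birational onto the normal variety $p(V)$, the classical fact that a finite birational morphism onto a normal variety is an isomorphism completes the proof. The principal obstacle is the final identification $H = \langle\tau\rangle$ in the birationality step, where the specific diagonal pseudo-reflection form of $\beta(\tau)$ enters in an essential way; the rest is formal from the universal property of the quotient together with Zariski's main theorem.
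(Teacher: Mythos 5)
You identify the right architecture, and since the paper states Lemma \ref{49} without proof (Section 2 defers all proofs to \cite{y6}), your route is the natural one to compare against: factor $p|_V$ through $\pi_\tau$ via $p\circ\tau=p$, observe $\bar{p}$ is finite and surjective, reduce everything to the degree count $\deg(p|_V)=m$, and conclude by the standard fact that a finite birational morphism onto a normal variety is an isomorphism. The factorization, the finiteness of $\bar p$, the freeness of the $\tau$-action on a general fiber (giving $m\mid\deg(p|_V)$, using that the fixed locus is the divisor $V\cap\{X_N=0\}$), and your analysis of $H=\mathrm{Gal}(k(V)/k(p(V)))$ up to the point that every $\sigma\in H$ stabilizes each line through $c$, hence fixes $c$ and induces the identity on $\mathbb P^{N-1}$ (by non-degeneracy of $p(V)$), are all correct.

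The gap sits exactly at the step you call the crux: ``combining with the pseudo-reflection form of $\beta(\tau)$ and the finiteness of $G_W$, one concludes $H=\langle\tau\rangle$.'' What your analysis actually yields is that $H$ consists of central collineations with center $c$; mapping such an element to the ratio of its two eigenvalues embeds $H$ into $k^{*}$ (elations are torsion-free in characteristic zero), so $H$ is cyclic, and a short computation with powers shows its generator is again diagonal of the form $[1,\ldots,1,\mu]$ with $\tau$ among its powers --- but nothing in the hypotheses forces $H=\langle\tau\rangle$, and in fact the statement fails without an extra assumption. Take $V\subset\mathbb P^{n+1}$ the smooth sextic $X_{n+1}^{6}=F_6(X_0,\ldots,X_n)$, with Galois point $W=\{(0:\cdots:0:1)\}$, $G_W=\langle\eta\rangle\cong\mathbb Z/6$ where $\eta=[1,\ldots,1,\zeta_6]$, and $\tau=\eta^{3}=[1,\ldots,1,-1]$, so $m=2$: all hypotheses of Lemma \ref{49} hold and $p(V)=\mathbb P^{n}$ is normal, yet $V/\langle\tau\rangle\to p(V)$ is the cyclic triple cover $Y^{3}=F_6$ (for $n=1$ a curve of genus $4$, not $\mathbb P^{1}$). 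So $\deg(p|_V)=m$, equivalently $H=\langle\tau\rangle$, is an implicit hypothesis that must be verified in each application rather than deduced from the diagonal form of $\beta(\tau)$ alone. In the paper's actual use of the lemma (the proof of Theorem \ref{100}, where $\deg p=2$ is asserted), it does hold, because $G\cong D_6$ is dihedral of order six and contains no cyclic subgroup strictly containing $\langle\tau\rangle$; your proof becomes complete once you either add this hypothesis to the statement or supply such a group-theoretic verification in place of the asserted conclusion.
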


We have a criterion that $(V,D)$ defines a Galois embedding. 

\begin{theorem}\label{8}
The pair $(V, D)$ defines a Galois embedding if and only if the following conditions hold{\rm :}
\begin{namelist}{3}
\item[{\rm (1)}]There exists a subgroup $G$ of ${\rm Aut}(V)$ satisfying that $|G|=D^n$.
\item[{\rm (2)}]There exists a $G$-invariant linear subspace ${\mathcal L}$ of ${\mathop{\mathrm{H^0}}\nolimits}(V, {\mathcal O}(D))$ of dimension $n+1$ such that, for any $\sigma \in G$, the restriction ${\sigma}^*|_{\mathcal L}$ is a multiple of the identity. 
\item[{\rm (3)}]The linear system ${\mathcal L}$ has no base points. 
\end{namelist}
\end{theorem}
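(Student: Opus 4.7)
The plan is to argue both implications by relating the geometric picture (projection from $W$) to the two representations recalled in Section 2.

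For necessity, assume $(V,D)$ defines a Galois embedding with Galois subspace $W$, and set $G := G_W \subset \mathrm{Aut}(V)$ via Representation (1). Condition (1) of the theorem follows from $|G_W| = \deg \pi = D^n$. To produce $\mathcal{L}$, I will choose coordinates on $\mathbb{P}^N$ so that $W = \{X_0 = \cdots = X_n = 0\}$ and let $\mathcal{L}$ be the span of the pullbacks $s_i := f^*X_i$; then $\dim \mathcal{L} = n+1$, and since $f(V) \cap W = \emptyset$, the linear system is base-point-free, settling condition (3). For the scalar-action condition (2), I will use that each $\sigma \in G_W$ fixes $K_0 = k(\mathbb{P}^n)$ pointwise, so $\pi \circ \sigma = \pi$; writing $\pi = (s_0 : \cdots : s_n)$ this forces $\sigma^* s_i = \mu_\sigma \cdot s_i$ for all $i$ with a common rational factor $\mu_\sigma$. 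Base-point-freeness then forces $\mu_\sigma$ to be a regular function on $V$, hence a constant, giving $\sigma^*|_\mathcal{L} = \mu_\sigma \cdot \mathrm{id}$.

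For sufficiency, suppose $G$ and $\mathcal{L}$ satisfy (1)--(3). I will pick a basis $s_0, \ldots, s_n$ of $\mathcal{L}$ and extend to a basis of $H^0(V, \mathcal{O}(D))$; the complete embedding $f_D : V \hookrightarrow \mathbb{P}^N$ together with $W := \{X_0 = \cdots = X_n = 0\}$ satisfies $W \cap f_D(V) = \emptyset$ by (3), so the projection $\pi := \pi_W \circ f_D : V \to \mathbb{P}^n$ is a finite morphism of degree $D^n$. The hypothesis that each $\sigma \in G$ acts by a common scalar on $\mathcal{L}$ means that $\sigma^* s_i / \sigma^* s_j = s_i / s_j$ for all $i, j$, so $\pi \circ \sigma = \pi$; hence $G$ fixes the subfield $K_0 = k(s_1/s_0, \ldots, s_n/s_0) \subset K$ pointwise. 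Therefore $K_0 \subset K^G$, and the chain of degrees $[K : K_0] = D^n = |G| = [K : K^G]$ yields $K^G = K_0$, so $K/K_0$ is Galois with group $G$, which means $W$ is a Galois subspace.

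The principal delicate point is the scalar-matrix conclusion in the necessity argument: one has to pass from the \emph{projective} equality $\pi \circ \sigma = \pi$ to the \emph{linear} statement $\sigma^*|_\mathcal{L} = \mu_\sigma \cdot \mathrm{id}$, and this step essentially relies on base-point-freeness of $\mathcal{L}$ to rule out a non-constant common factor. Once this hurdle is cleared, the remaining content is a degree count, together with the identification $V/G_W \cong \mathbb{P}^n$ supplied by Proposition \ref{15} when one wants a clean geometric dictionary.
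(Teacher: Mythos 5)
Your proof is correct, and it is essentially the argument behind the paper's statement: the paper itself gives no proof of Theorem \ref{8}, deferring to \cite{y6}, where the criterion is established exactly along your lines --- necessity by taking coordinates with $W=\{X_0=\cdots=X_n=0\}$ so that $\pi\circ\sigma=\pi$ yields a common factor $\mu_\sigma=\sigma^*s_i/s_i$ that base-point-freeness makes a global regular, hence constant, function, and sufficiency by the Artin-type degree count $[K:K_0]=D^n=|G|=[K:K^G]$. Your identification of the scalar-matrix step as the delicate point is accurate (note only that $\sigma^*$ acting on ${\mathop{\mathrm{H^0}}\nolimits}(V,{\mathcal O}(D))$ at all uses $\sigma^*(V\ast H)=V\ast H$ for hyperplanes $H\supset W$, which is the content of Representation 2 in the paper).
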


It is easy to see that $\sigma \in G_W$ induces an automorphism of $W$, hence we obtain another representation of $G_W$ as follows. 
Take a basis $\{f_0, f_1, \ldots, f_N  \}$ of ${\mathop{\mathrm{H^0}}\nolimits}(V, {\mathcal O}(D))$ satisfying that $\{f_0, f_1, \ldots, f_n  \}$ is a basis of ${\mathcal L}$ in Theorem \ref{8}. Then we have the representation  

\[
\newcommand{\bg}{%
 \family{cmr}\size{20}{12pt}\selectfont}
 \newcommand{\bigzerol}{\smash{\hbox{\bg 0}}}
 \newcommand{\bigzerou}{%
  \smash{\lower1.7ex\hbox{\bg 0}}}
\beta_1(\sigma)=\begin{pmatrix}\lambda_{\sigma} & & & \vdots &  \\
 & \ddots & & \vdots & {\bf *} \\
 & & \lambda_{\sigma} & \vdots & \\
 \cdots & \cdots & \cdots & \vdots & \cdots  \\
& {\bf 0} & & \vdots & M_{\sigma}   
\end{pmatrix}. \eqno(3) 
\]
Since the projective representation is completely reducible, we get another 
representation using a direct sum decomposition:
\[
\beta_2(\sigma)=\lambda_{\sigma} \cdot {\bf 1}_{n+1}\oplus M'_{\sigma}. 
\]
Thus we can define
\[
\gamma(\sigma)=M'_{\sigma} \in PGL(N-n, k). 
\]
Therefore $\sigma$ induces an automorphism on $W$ given by  $M'_{\sigma}$. 

\begin{rep}\label{19}
We get a third representation 

$$
\gamma : G_W \longrightarrow PGL(N-n, k).\eqno(4)
$$

\end{rep}

\bigskip

Let $G_1$ and $G_2$ be the kernel and image of $\gamma$ respectively.

\begin{theorem}\label{a3}
We have an exact sequence of groups 
\[
1 \longrightarrow G_1 \longrightarrow G \stackrel{\gamma}{\longrightarrow} G_2 \longrightarrow 1, 
\]
where $G_1$ is a cyclic group.
\end{theorem}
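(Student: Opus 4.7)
The exact sequence portion is tautological once we identify $G_1 = \ker\gamma$ and $G_2 = \mathrm{Im}\,\gamma$, so the substantive claim is that $G_1$ is cyclic. My plan is to realize $G_1$ as a finite subgroup of the multiplicative group $k^*$ via a character that captures the ``ratio of eigenvalues'' between the two summands in the decomposition $\beta_2(\sigma) = \lambda_\sigma\cdot \mathbf{1}_{n+1} \oplus M'_\sigma$.

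First, I would fix the representation $\beta_2$ coming from the direct sum decomposition of ${\mathop{\mathrm{H^0}}\nolimits}(V,{\mathcal O}(D))$ into the $G$-invariant subspace ${\mathcal L}$ and a complement (which, projectively, is the subspace $W$). For $\sigma\in G_1$, by definition $\gamma(\sigma)=M'_\sigma$ is the identity in $PGL(N-n,k)$, so in $GL(N-n,k)$ we have $M'_\sigma = \mu_\sigma\cdot\mathbf{1}_{N-n}$ for some $\mu_\sigma\in k^*$. Rescaling the overall lift so that the top block becomes the identity, we obtain
\[
\beta_2(\sigma) = \mathbf{1}_{n+1} \oplus \nu_\sigma\cdot\mathbf{1}_{N-n}, \qquad \nu_\sigma := \mu_\sigma/\lambda_\sigma \in k^*,
\]
as an element of $PGL(N+1,k)$. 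The number $\nu_\sigma$ is well defined because it is the ratio of the two scalar blocks.

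Next I would check that $\chi\colon G_1\to k^*$, $\sigma\mapsto \nu_\sigma$, is a group homomorphism. This is immediate from the block-diagonal form: composing two such matrices multiplies the bottom blocks, giving $\nu_{\sigma\tau}=\nu_\sigma\nu_\tau$. Injectivity is the key observation: if $\nu_\sigma=1$, then $\beta_2(\sigma)$ is the identity in $PGL(N+1,k)$; since $\beta_2$ is (up to change of basis) the faithful representation $\beta$ of Representation \ref{18}, this forces $\sigma=1$ in $G$.

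Finally, $\chi$ embeds $G_1$ as a finite subgroup of $k^*$, and every finite subgroup of the multiplicative group of a field is cyclic. This yields the cyclicity of $G_1$ and completes the proof of the theorem. The only step requiring real care is the reduction to the normalized block form above — making sure that the projective scaling is used consistently so that $\nu_\sigma$ truly depends only on $\sigma$, and verifying that the faithfulness of $\beta$ transfers to faithfulness of the diagonalized $\beta_2$; I expect this bookkeeping (rather than any deep geometric input) to be the main, though modest, obstacle.
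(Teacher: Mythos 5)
Your proof is correct: for $\sigma\in\ker\gamma$ the two diagonal blocks of a lift of $\beta_2(\sigma)$ are scalars $\lambda_\sigma$ and $\mu_\sigma$, the ratio $\nu_\sigma=\mu_\sigma/\lambda_\sigma$ is invariant under the overall projective rescaling, $\chi(\sigma)=\nu_\sigma$ is a homomorphism by block multiplication, and its injectivity follows from the faithfulness of $\beta$ (which does transfer to $\beta_2$, since complete reducibility just rewrites $\beta_1$ in a new basis), so $G_1$ embeds in $k^*$ and is therefore cyclic. The paper itself states Theorem \ref{a3} without proof, deferring to \cite{y6}, and your argument is essentially the one used there, so the approaches coincide.
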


\begin{corollary}\label{a4}
If $N=n+1$, i.e., $f(V)$ is a hypersurface, then $G$ is a cyclic group.
\end{corollary}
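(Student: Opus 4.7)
The plan is to extract the conclusion directly from the exact sequence in Theorem \ref{a3} by computing what the target of $\gamma$ looks like in the hypersurface case.

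First I would observe that the Galois subspace $W$ has dimension $N - n - 1$, so when $N = n+1$ we have $\dim W = 0$, i.e., $W$ is a single point of $\mathbb{P}^N$. The representation $\gamma$ was defined via the restriction to $W$ of the matrix $\beta_1(\sigma)$ after passing to the direct sum decomposition $\beta_2(\sigma) = \lambda_\sigma \cdot \mathbf{1}_{n+1} \oplus M'_\sigma$, so that $\gamma(\sigma) = M'_\sigma \in PGL(N-n, k)$ represents the induced action on $W$. With $N - n = 1$, the group $PGL(1,k)$ is trivial, since every nonzero scalar matrix is the identity of $PGL$.

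Therefore $\gamma$ is the trivial homomorphism, and its image $G_2$ is the trivial group. The exact sequence of Theorem \ref{a3},
\[
1 \longrightarrow G_1 \longrightarrow G \stackrel{\gamma}{\longrightarrow} G_2 \longrightarrow 1,
\]
then collapses to an isomorphism $G \cong G_1$. Since $G_1$ is cyclic by the same theorem, $G$ is cyclic.

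There is no real obstacle here, as the corollary is essentially a dimension count together with the triviality of $PGL(1,k)$; the only thing to double-check is that the representation $\gamma$ is indeed to be read as a map into $PGL(N-n, k)$ with $N-n$ equal to the number of coordinates on the linear subspace $W$, which is consistent with the conventions established in Representation \ref{19} and formula (4).
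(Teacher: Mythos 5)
Your proof is correct and matches the intended argument: the paper presents this as an immediate consequence of Theorem \ref{a3} (citing \cite{y3} for an earlier direct derivation), and with $N=n+1$ the target $PGL(1,k)$ of $\gamma$ is trivial, so $G\cong G_1$ is cyclic, exactly as you argue. Your dimension bookkeeping ($W$ a point, $M'_\sigma$ of size $N-n=1$) is consistent with the paper's conventions in Representation \ref{19}.
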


This assertion has been obtained in \cite{y3}. 
Moreover we have another representation.

Suppose that $(V, D)$ defines a Galois embedding and let $G$ be a Galois group at some Galois subspace $W$. Then, 
take a general hyperplane $W_1$ of $\mathbb P^n$ and put $V_1={\pi}^*(W_1)$. The divisor $V_1$ has the 
following properties{\rm :}
\begin{namelist}{(iii)}
\item[\rm{(i)}]If $n \geq 2$, then $V_1$ is a smooth irreducible variety. 
\item[\rm{(ii)}]$V_1 \sim D$. 
\item[\rm{(iii)}]${\sigma}^*(V_1)=V_1$ for any  $\sigma \in G$. 
\item[\rm{(iv)}] $V_1/G$ is isomorphic to $W_1$.
\end{namelist}
Put $D_1=V_1 \cap H_1$, where $H_1$ is a general hyperplane of ${\mathbb P}^N$. Then $(V_1, D_1)$ 
defines a Galois embedding with the Galois group $G$ (cf. Remark \ref{31}). 
Iterating the above procedures, we get a sequence of pairs $(V_i, D_i)$ such that 
\[
(V,D) \supset (V_1, D_1) \supset \cdots \supset (V_{n-1}, D_{n-1}). \eqno(5)
\]
These pairs satisfy the following properties:
\begin{enumerate}
\item[(a)]$V_i$ is a smooth subvariety of $V_{i-1}$, which is a hyperplane section of $V_{i-1}$, 
where $D_i=V_{i+1}$, $V=V_0$ and $D=V_1$ ($1 \le i \le n-1$). 
\item[(b)] $(V_i, D_i)$ defines a Galois embedding with the same Galois group $G$.
\end{enumerate} 

\begin{definition}\label{40}
The above procedure to get the sequence $(5)$ is called the Descending Procedure. 
\end{definition}

Letting $C$ be the curve $V_{n-1}$, we get the next fourth representation. 

\bigskip

\begin{rep}\label{16}
We have a fourth faithful representation 
$$
\delta : G_W \hookrightarrow {\rm Aut}(C), \eqno(6)
$$
where $C$ is a curve in $V$ given by $V \cap L$ such that $L$ is a general linear subvariety of
 ${\mathbb P}^N$ with dimension $N-n+1$ containing $W$. 
\end{rep}

Since the Inverse Problem of Galois Theory over $k(x)$ is affirmative (\cite{mm0}), 
we can prove the following. 

\begin{remark}
Giving any finite group $G$, there exists a smooth curve and very ample divisor $D$ 
such that $(C, D)$ defines a Galois embedding with the Galois group $G$. 
\end{remark}

\section{Statement of results}
Let $V$ be a variety of dimension $n$. 
We say that $V$ has the property $(\P_n)$ if 
\begin{enumerate}
\item there exists a very ample divisor $D$ with $D^n=6$, and 
\item $\dim{\mathop{\mathrm{H^0}}\nolimits}(V,\ {\mathcal O}(D))=n+3$. 
\end{enumerate}

An example of such a variety is a smooth $(2,3)$-complete intersection, where $D$ is a hyperplane section.  
In particular, in case $n=1$, $V$ is a non-hyperelliptic curve of genus four and $D$ is a canonical divisor.  
In case $n=2$, $V$ is a $K3$ surface such that there exists a very ample divisor $D$ with $D^2=6$. 
However, the variety with the property $(\P_n)$ is not necessarily the complete intersection, see Remark \ref{47} below.

We will study the Galois embedding of $V$ for the variety with the property $(\P_n)$. 
Clearly the Galois group is isomorphic to the cyclic group of order six or $D_6$. In the latter case we say that $(V, D)$ defines a $D_6$-embedding or, more simply $V$ 
has a $D_6$-embedding. 

\begin{theorem}\label{100}
Assume $V$ has the property $(\P_n)$. If $V$ has a $D_6$-embedding, then $V$ is obtained as the Galois closure 
variety of a smooth cubic $\Delta$ in ${\mathbb P}^{n+1}$ with respect to a suitable projection center. 
\end{theorem}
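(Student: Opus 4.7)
The strategy is to realize a smooth cubic hypersurface $\Delta \subset \mathbb{P}^{n+1}$ as a quotient $V/\langle\kappa\rangle$ by an involution $\kappa \in G$, and then to recover $V$ as the Galois closure variety of the non-Galois cover $\Delta \to \mathbb{P}^n$. Embed $V \hookrightarrow \mathbb{P}^{n+2}$ via $|D|$; the Galois subspace $W$ has dimension $N-n-1 = 1$, hence is a line. Identify $G \cong D_6$ with $S_3$. By Theorem \ref{a3}, the kernel $G_1$ of $\gamma : G \to PGL(W)$ is a cyclic normal subgroup of $S_3$, so either trivial or equal to $A_3 \cong \mathbb{Z}/3$. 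Consequently any order-$2$ subgroup $K = \langle\kappa\rangle \subset G$ is disjoint from $G_1$, so $\gamma(\kappa)$ is a nontrivial involution on $W \cong \mathbb{P}^1$. Writing the block form $\beta_2(\kappa) = \lambda_\kappa\mathbf{1}_{n+1} \oplus M'_\kappa$, I diagonalize $M'_\kappa$ and rescale in $PGL$ to bring $\beta(\kappa)$ to the form $\mathrm{diag}(1,\ldots,1,-1)$. This matches Lemma \ref{49} with $m = 2$, yielding $V/\langle\kappa\rangle \cong p(V)$ provided $p(V)$ is normal, where $p : \mathbb{P}^{n+2} \dashrightarrow \mathbb{P}^{n+1}$ is the projection from $P_2 = (0:\cdots:0:1) \in W$. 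Since $P_2 \in W$ and $W \cap V = \emptyset$, the map $p|_V$ is a morphism, and the degree relation $6 = \deg V = \deg(p|_V)\cdot \deg p(V) = 2\cdot \deg p(V)$ identifies $\Delta := p(V)$ as a cubic hypersurface in $\mathbb{P}^{n+1}$.

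The main obstacle is verifying that $\Delta$ is smooth (equivalently, by Lemma \ref{49}, that $p(V)$ is normal). By Proposition \ref{15} the fixed locus $R$ of $\kappa$ is a divisor on $V$; in characteristic $0$ it is automatically smooth. I argue that $p|_R$ is a closed immersion: every line through $P_2$ is $\kappa$-invariant with $\kappa$ acting as $t\mapsto -t$, so such a line meets the $\kappa$-fixed set in at most one further point, yielding set-theoretic injectivity of $p|_R$; and the eigenspace decomposition of $\kappa$ on $T_v V$ (with $T_v R$ the $+1$-eigenspace and a $1$-dimensional $-1$-eigenspace transverse to $T_v R$) shows $dp$ is injective on $T_v R$ at every $v \in R$. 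Hence $p(R)$ is smoothly embedded in $\mathbb{P}^{n+1}$. The morphism $V \to \Delta$ is then a degree-$2$ cover branched along the smooth divisor $p(R)$, and the standard local model $y^2 = f$ (with $f$ a regular parameter cutting out $p(R)$) forces $\Delta$ to be smooth at every branch point, while étaleness off $p(R)$ together with the smoothness of $V$ gives smoothness of $\Delta$ everywhere.

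To conclude, since $W$ contains $P_2$, the image $p(W)$ collapses to a single point $P' \in \mathbb{P}^{n+1}$, and $P' \notin \Delta$ because $p^{-1}(P') = W$ is disjoint from $V$. The projection $\pi_W$ factors as $V \to \Delta \to \mathbb{P}^n$ with the second map being the degree-$3$ projection from $P'$. In $G = S_3$ the three order-$2$ subgroups have trivial pairwise intersection, so the normal core of $K$ in $G$ is trivial; therefore the Galois closure of the non-Galois cover $\Delta \to \mathbb{P}^n$ corresponds to the trivial subgroup of $G$, that is, equals $V \to \mathbb{P}^n$ itself. This exhibits $V$ as the Galois closure variety of the smooth cubic $\Delta$ with respect to projection center $P'$, completing the proof.
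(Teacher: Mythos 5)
Your proposal is correct and takes essentially the same route as the paper: reduce the involution $\kappa$ (the paper's $\tau$) to the form $\mathrm{diag}(1,\ldots,1,-1)$, project from the point $(0:\cdots:0:1)$ on the Galois line, apply Lemma \ref{49} together with the degree count $6=2\cdot 3$ to identify $V/\langle\kappa\rangle$ with a smooth cubic $\Delta\subset\mathbb{P}^{n+1}$, and recover $V$ as the Galois closure of the residual degree-$3$ projection of $\Delta$ from $p(W)$. If anything you supply more detail than the paper at two points where it is terse, deriving the diagonal form via Theorem \ref{a3} rather than the explicit $D_6$ matrices, and making explicit the trivial-normal-core argument identifying $V$ with the Galois closure, which the paper leaves implicit in the remark that $\rho_2$ is a non-Galois triple covering.
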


Next we consider the converse assertion. Let $\Delta$ be a smooth cubic of dimension $n$ in $\mathbb P^{n+1}$. 
Take a non-Galois point $P \in \mathbb P^{n+1} \setminus \Delta$. 
Note that, for a smooth hypersurface $X \subset \mathbb P^{n+1}$, the number of Gaois points is 
at most $n+2$. The maximal number is attained if and only if $X$ is projectively equivalent to the Fermat variety (cf. \cite{y3}). 

Define the set $\Sigma_P$ of lines as  

\medskip

$\Sigma_P=\{ \ \ell \ | \ \ell \ \mathrm{is \ a \ line \ passing \ through} \ P \ \mathrm{such \ that} \ \ell \ast \Delta \ \mathrm{can \ be \ expressed \ as} \ 
2P_1+P_2, \ \mathrm{where} \ P_i \in \Delta \ (i=1, 2) \ \ \mathrm{and} \ P_1 \ne P_2 \} $

\medskip

The closure of the set $\bigcup_{\ell \in \Sigma_P} \ell$ is a cone, we denote it by $C_P(\Delta)$. 
Then we have the following. 

\begin{lemma}\label{42}
The cone $C_P(\Delta)$ is a hypersurface of degree six. 
\end{lemma}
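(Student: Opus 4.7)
The plan is to realize $C_P(\Delta)$ as the zero locus of the discriminant of a certain cubic in one variable built from the equation of $\Delta$. First, I choose homogeneous coordinates on $\mathbb{P}^{n+1}$ so that $P=(0:\cdots:0:1)$, and I write the defining cubic of $\Delta$ as
\[
F = a_3 X_{n+1}^3 + a_2 X_{n+1}^2 + a_1 X_{n+1} + a_0,
\]
where $a_i=a_i(X_0,\ldots,X_n)$ is homogeneous of degree $3-i$. The assumption $P\notin\Delta$ gives $a_3=F(P)\neq 0$. For a point $Q=(q_0:\cdots:q_n:0)$ in the hyperplane $\{X_{n+1}=0\}$, the line $\overline{PQ}$ meets $\Delta$ exactly at the points $(q_0:\cdots:q_n:s)$ with $s$ a root of
\[
f_Q(s) = a_3 s^3 + a_2(Q)\,s^2 + a_1(Q)\,s + a_0(Q).
\]

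Next, the line $\overline{PQ}$ has a multiple intersection with $\Delta$ precisely when the discriminant $\mathrm{disc}(f_Q)$ vanishes. Using the classical formula
\[
\mathrm{disc}(f_Q) = 18\,a_3 a_2 a_1 a_0 - 4\,a_2^3 a_0 + a_2^2 a_1^2 - 4\,a_3 a_1^3 - 27\,a_3^2 a_0^2,
\]
and recalling that $a_i(Q)$ is homogeneous of degree $3-i$ in $q_0,\ldots,q_n$, a term-by-term degree check gives $6$ for every summand, so $\mathrm{disc}(f_Q)$ is homogeneous of degree six in $Q$. It is not identically zero, for a generic line through $P$ meets the smooth cubic $\Delta$ in three distinct points, forcing $f_Q$ to have three distinct roots for generic $Q$. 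Hence $\{\mathrm{disc}(f_Q)=0\}$ is a divisor of degree six in $\{X_{n+1}=0\}\cong \mathbb{P}^n$.

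Finally, I would identify $C_P(\Delta)$ with the cone in $\mathbb{P}^{n+1}$ over this discriminant hypersurface, namely the zero locus of $\mathrm{disc}(f_{(X_0,\ldots,X_n)})$ regarded as a form on $\mathbb{P}^{n+1}$ (independent of $X_{n+1}$). The elements of $\Sigma_P$ correspond to those $Q$ for which $f_Q$ has a double root that is not a triple root, so they form an open subset of $\{\mathrm{disc}(f_Q)=0\}$; the complementary locus of triple tangents is cut out by the additional simultaneous vanishing of $f_Q'$ and $f_Q''$, and is therefore a proper subvariety. Consequently $\Sigma_P$ is dense in the discriminant hypersurface, and the closure $C_P(\Delta)$ coincides with the degree-six cone from $P$ described above.

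The main (but modest) obstacle is the density statement in the last step: one has to check that $\{\mathrm{disc}(f_Q)=0\}$ is not swallowed up by the triple-tangent locus. This fails only if every double root were forced to be a triple root, which would require all three coefficients of $f_Q$ past the leading one to be proportional in a way incompatible with the smoothness of $\Delta$; alternatively, one may argue via a generic plane section through $P$ (which reduces to the classical fact that a smooth plane cubic has only finitely many flexes, hence only finitely many triple tangents from a general point).
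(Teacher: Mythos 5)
Your degree computation is sound and is in fact the algebraic counterpart of what the paper does in its examples: with $P=(0:\cdots:0:1)$ the discriminant of the fiber cubic is homogeneous of degree six, exactly as in the displayed discriminants $(8)$ and $(10)$. But note that the paper's own proof of Lemma \ref{42} is different and much shorter: it cuts with a general $2$-plane $H_2$ through $P$, so that $\Delta\cap H_2$ is a smooth plane cubic, and then $C_P(\Delta)\cap H_2$ consists of the six tangent lines from a point to a smooth plane cubic (the class of a smooth cubic is six), giving $\deg C_P(\Delta)=6$ without any discriminant formula.

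The genuine gap is in your final identification step, and both of the justifications you offer for it are wrong. First, ``cut out by the additional vanishing of $f_Q'$ and $f_Q''$, therefore a proper subvariety'' is a non sequitur: the extra equations can vanish identically along a component of $\{\mathrm{disc}(f_Q)=0\}$, and smoothness of $\Delta$ does \emph{not} rule this out. Take $\Delta$ the Fermat cubic and $P$ a Galois point, e.g.\ in the plane case $F=X_0^3+X_1^3+X_2^3$ and $P=(0:0:1)$: then $f_Q(s)=s^3+q_0^3+q_1^3$, $\mathrm{disc}(f_Q)=-27(q_0^3+q_1^3)^2$ is a perfect square, every tangent line from $P$ is a triple tangent, and $\Sigma_P=\emptyset$ — so your failure mode occurs for a smooth cubic, and likewise ``finitely many flexes'' does not prevent \emph{all} tangent lines from $P$ in a plane section from being flex tangents. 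What rescues the statement is the standing hypothesis, which you never invoke, that $P$ is a non-Galois point. Second, even granting density of $\Sigma_P$, your argument only gives a set-theoretic equality $C_P(\Delta)=\{\mathrm{disc}=0\}$, whereas the degree-six conclusion for the reduced cone additionally requires $\mathrm{disc}(f_Q)$ to be square-free: a flex tangent through $P$ contributes a squared factor to the discriminant, and then the reduced closure of $\bigcup_{\ell\in\Sigma_P}\ell$ has degree strictly less than six (for $n=1$ with $P$ on a flex tangent — precisely the configuration of Remark \ref{47} — there are only four lines in $\Sigma_P$). The paper is aware of this subtlety: reducedness and irreducibility of the discriminant are established separately, for general $P$, in Claim \ref{102} by citing \cite[Lemma 3]{y1} and \cite[Claim 1]{y3}. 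So to complete your route you must either prove square-freeness of the discriminant for the points $P$ under consideration (using non-Galois-ness, or generality of $P$), or reinterpret $C_P(\Delta)$ as the discriminant divisor rather than the reduced closure.
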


We can express as $\Delta \ast C_P(\Delta)=2R_1+R_2$, where $R_1$ and $R_2$ are different divisors on $\Delta$. 

\begin{definition}\label{43}
We call $P$ a good point if  
\begin{enumerate}
\item $R_2$ is smooth and irreducible in case $n \geq 2$, or 
\item $R_2$ consists of six points in case $n=1$.
\end{enumerate}

\end{definition}

\begin{proposition}\label{44}
If $P$ is a general point for $\Delta$, then $P$ is a good point.
\end{proposition}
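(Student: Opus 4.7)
The plan is to exhibit an explicit projective linear automorphism of $\mathbb{P}^{n+1}$ that restricts to an isomorphism $R_1 \to R_2$, thereby reducing the proposition to a Bertini argument on $R_1$.  Write $\Delta = \{F = 0\}$ with $F(X) = T(X, X, X)$, where $T$ is the symmetric trilinear form associated with $F$.  A point $Q \in \Delta$ lies in $R_1$ precisely when the line $PQ$ is tangent to $\Delta$ at $Q$, equivalently when $T(P, Q, Q) = 0$, so $R_1$ is the intersection of $\Delta$ with the polar quadric $\{T(P, X, X) = 0\}$.

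The linear system of polar quadrics, as $P$ varies over $\mathbb{P}^{n+1}$, is base-point-free on $\Delta$: if some $X \in \Delta$ satisfied $T(P, X, X) = 0$ for every $P$, then the linear form $T(\,\cdot\,, X, X)$ would vanish identically, forcing every partial derivative of $F$ to vanish at $X$, contradicting smoothness of $\Delta$.  By Bertini, $R_1$ is smooth for a general $P$.  When $n \ge 2$, the class of $R_1$ on $\Delta$ is $2H|_{\Delta}$, which is ample, and the Lefschetz connectedness theorem then forces $R_1$ to be irreducible as well.  When $n = 1$, $R_1$ is a smooth zero-dimensional divisor of degree six, i.e.\ six distinct points.

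To transfer these properties to $R_2$, parameterize the line $PQ$ as $(a : b) \mapsto aP + bQ$ and expand
\[
F(aP + bQ) = a^{3} F(P) + 3 a^{2} b\, T(P, P, Q) + 3 a b^{2}\, T(P, Q, Q) + b^{3} F(Q).
\]
Using $F(Q) = 0$ and, for $Q \in R_1$, $T(P, Q, Q) = 0$, this collapses to $a^{2}\bigl(a F(P) + 3 b\, T(P, P, Q)\bigr)$, which exhibits a double zero at $(a : b) = (0 : 1)$, namely the point $Q$ itself, together with a residual zero at
\[
\psi_{P}(Q) := F(P)\, Q - 3\, T(P, P, Q)\, P.
\]
This $\psi_{P}$ is the restriction to $R_1$ of the linear endomorphism $X \mapsto F(P) X - 3\, T(P, P, X) P$ of $k^{n+2}$; its kernel is trivial, since $F(P) \ne 0$ forces any $X$ in the kernel to be a scalar multiple of $P$, and direct substitution then gives $X = 0$.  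Hence $\psi_{P}$ induces a projective automorphism of $\mathbb{P}^{n+1}$, and its restriction $R_1 \to R_2$ is an isomorphism of varieties.  This immediately transfers smoothness and (for $n \ge 2$) irreducibility, or the six-distinct-point property (for $n = 1$), from $R_1$ to $R_2$.

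The main obstacle I expect is the irreducibility of $R_1$ when $n \ge 2$: Bertini supplies only smoothness of a general polar section, and promoting this to irreducibility requires the ampleness of $2 H|_{\Delta}$ together with a Lefschetz-type connectedness result.  Once this is in hand, the remainder is formal: the projective-linear isomorphism $\psi_{P}$ identifies $R_1$ with $R_2$, so the good-point properties of $R_2$ follow from those of $R_1$.
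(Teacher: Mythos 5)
Your argument is correct in its essentials but takes a genuinely different route from the paper's. The paper argues downstairs in $\mathbb{P}^n$: it first shows (Claim \ref{102}) that the image $B=\pi_P(R_2)$ is irreducible for general $P$, by invoking the reducedness and irreducibility of the discriminant from \cite[Lemma 3]{y1} and \cite[Claim 1]{y3}, and then lifts this to $R_2$ (Claim \ref{104}) using the fact that a line $\ell\in\Sigma_P$ carries exactly one residual point, so $\pi_P|_{R_2}:R_2\to B$ has degree one; smoothness of $R_2$ is then obtained by a local analysis of $\ell\ast\Delta=2P_1+P_2$, including the inflectional case $P_1=P_2$. You instead work upstairs on $\Delta$: you identify $R_1$ with the polar section $\Delta\cap\{T(P,X,X)=0\}$ (consistent with Example \ref{29}, where $2R_1$ is cut out by the square of $\partial F/\partial X_3$), get smoothness of $R_1$ from Bertini applied to the base-point-free system of polars, irreducibility for $n\ge 2$ from ampleness of $2H|_\Delta$ plus connectedness, and then transport everything to $R_2$ via the explicit linear automorphism $\psi_P(X)=F(P)X-3T(P,P,X)P$; your residual-point computation and the injectivity of $\psi_P$ (which uses $F(P)\neq 0$, i.e. $P\notin\Delta$) are both correct. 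Your route is more self-contained, avoids the external discriminant results, yields the extra fact $R_2\cong R_1$, and for $n=1$ produces the six distinct points immediately from injectivity of $\psi_P$.

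One step you treat as automatic deserves explicit justification: that the divisor $R_2$ in the decomposition $\Delta\ast C_P(\Delta)=2R_1+R_2$ really equals $\psi_P(R_1)$. Your computation gives the set-theoretic equality $\mathrm{Supp}(\Delta\cap C_P(\Delta))=R_1\cup\psi_P(R_1)$, but you must still rule out that the intersection multiplicity of $\Delta$ and $C_P(\Delta)$ along $R_1$ exceeds two (in which case $R_2$ would contain $R_1$ as a component) and that $\psi_P(R_1)=R_1$ (which would occur only if every tangent line from $P$ were inflectional). Both are excluded for general $P$ by cutting with a general $2$-plane through $P$, as in the paper's proof of Lemma \ref{42}: the section is a smooth plane cubic with six simple, non-inflectional tangent lines from a general point, so the generic multiplicity along $R_1$ is exactly two; then $\deg R_2=18-2\cdot 6=6=\deg\psi_P(R_1)$, and since $\psi_P(R_1)$ is reduced and irreducible for $n\ge 2$ (for $n=1$ one instead sums $\ell_i\ast\Delta=2Q_i+\psi_P(Q_i)$ over the six tangent lines), it follows that $R_2=\psi_P(R_1)$ as divisors. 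With these few added lines your proof is complete.
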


To some extent the converse assertion of Theorem \ref{100} holds as follows. 

\begin{theorem}\label{101}
If $\Delta_P$ is a Galois closure variety of a smooth cubic $\Delta \subset \mathbb P^{n+1}$, 
where the projection center $P$ is a good point, then $\Delta_P$ is a smooth $(2,3)$-complete intersection in $\mathbb P^{n+2}$ with $D_6$-embedding.  
\end{theorem}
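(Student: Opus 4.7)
The approach is to realize $\Delta_P$ explicitly as a $(2,3)$-complete intersection in $\mathbb P^{n+2}$, carrying the $D_6 = S_3$-action given by permutation of the three intersection points of a line through $P$ with $\Delta$. I choose coordinates on $\mathbb P^{n+1}$ so that $P = (0:\cdots:0:1)$ and write $\Delta$ as
\[
F := X_{n+1}^3 + b_1 X_{n+1}^2 + b_2 X_{n+1} + b_3 = 0, \qquad b_i \in k[X_0,\ldots,X_n]_i.
\]
In $\mathbb P^{n+2}$ with coordinates $(X_0:\cdots:X_n:U:V)$, I would let $Z$ be the subscheme cut out by $F(X_0,\ldots,X_n,U) = 0$ together with
\[
Q := U^2 + UV + V^2 + b_1(U+V) + b_2 = 0.
\]
The identity $(V-U)\,Q = F(X,V) - F(X,U)$ forces $F(X,V)=0$ on $Z$, so the construction is symmetric in $U$ and $V$; geometrically $(U,V)$ is an ordered pair of distinct roots of $F$ viewed as a cubic in its last variable, and the third root is $W_0 := -b_1 - U - V$.

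Next I would verify the Galois-theoretic content. The line $\Lambda := \{X_0 = \cdots = X_n = 0\}$ is disjoint from $Z$ (setting every $X_i = 0$ in $F$ and $Q$ forces $U = V = 0$), so projection from $\Lambda$ gives a finite morphism $\pi: Z \to \mathbb P^n$ of degree $6$. The symmetric group $S_3$ permutes $U, V, W_0$ via linear automorphisms of $\mathbb P^{n+2}$ that fix the $X_i$ and preserve both defining equations, yielding a faithful representation $D_6 \hookrightarrow PGL(n+3, k)$ that realizes $\pi$ as a Galois cover. The stabilizer of $U$, namely $\langle (V\,W_0)\rangle$, has $\Delta$ as its quotient, so $k(Z)$ is the Galois closure of $k(\Delta)/k(\mathbb P^n)$; once smoothness is established, $Z \cong \Delta_P$ as normal varieties.

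The main obstacle is smoothness. I would treat the second projection $\rho : Z \to \Delta$, $(X:U:V) \mapsto (X:U)$, as a double cover defined by the quadratic $V^2 + (U+b_1)V + (U^2 + b_1 U + b_2) = 0$ in $V$. Its discriminant $b_1^2 - 4 b_2 - 2 b_1 U - 3 U^2$ vanishes on $\Delta$ exactly along $R_2$: $\rho$ branches where the two residual roots of $F(X,T)$ besides $U$ coincide, i.e.\ where $\overline{PX}$ is tangent to $\Delta$ at a point other than $(X,U)$, matching the geometric description of $R_2$ in $\Delta \ast C_P(\Delta) = 2 R_1 + R_2$. The good-point hypothesis then gives $R_2$ smooth (six reduced points for $n = 1$, smooth and irreducible for $n \geq 2$), and the standard smooth-double-cover criterion yields smoothness of $Z$. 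Irreducibility follows from the Galois-closure description of $k(Z)$, and since $\dim Z = n$ and $\deg Z = 6 = 2 \cdot 3$, the containment $Z \subset \{F = Q = 0\}$ is an equality, giving the complete intersection structure. Finally Theorem \ref{8} applies with $G = D_6$ and $\mathcal L = \langle X_0, \ldots, X_n \rangle$, a base-point-free $(n+1)$-dimensional subspace on which $D_6$ acts trivially, confirming the $D_6$-Galois embedding.
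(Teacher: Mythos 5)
Your proposal is correct and is essentially the paper's own proof: the paper also realizes $\Delta_P$ as the double cover of $\Delta$ branched along $R_2$, cut out in $\mathbb P^{n+2}$ by $F=0$ and $X_{n+2}^2=\Phi_2$, where $\Phi_2$ is the degree-two factor of the pulled-back discriminant ($\pi_P^*(\Phi)=\Phi_1^2\cdot\Phi_2$), with smoothness coming from the good-point hypothesis and the Galois line $X_0=\cdots=X_n=0$ giving the $D_6$-embedding. Your equation $Q=0$ in the second-root coordinate $V$ is the same cover up to the linear substitution $X_{n+2}=2V+U+b_1$ (completing the square turns $Q=0$ into $X_{n+2}^2=b_1^2-4b_2-2b_1U-3U^2$, exactly the paper's $\Phi_2$), and your explicit verification of the $S_3$-action and of the criterion of Theorem \ref{8} merely fills in details the paper leaves implicit.
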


\begin{remark}\label{45}
In the assertion of Theorem \ref{101}, the construction of the Galois closure is closely related to the one in \cite[Tokunaga]{to}. In case $n=2$, the Galois closure surface is a $K3$ surface. 
\end{remark}

Applying the Descending Procedure to the variety of Theorem \ref{101}, we get the following. 

\begin{proposition}\label{46}
If a variety $V$ is a smooth $(2,3)$-complete intersection and has a $D_6$-embedding, then there exists the following sequence of varieties $V_i$, 
where $V_i$ has the same properties as $V$ does, i.e., 
\begin{enumerate}
\item[(i)] $V_i$ is a subvariety of $V_{i-1} \ (i \geq 1)$, where $V_0=V$.  
\item[(ii)] $V_i$ is also a smooth $(2,3)$-complete intersection of hypersurfaces in $\mathbb P^{n+2-i}, \ 0 \le i \le n-1$, 
\item[(iii)] $V_i$ has the property $(\P_{n-i})$, 
\item[(iv)] $V_i$ has a $D_6$-embedding. 
\end{enumerate}
The situation above is illustrated as follows: 
\[
\begin{array}{ccccccccc}
\mathbb P^{n+2} & \dashrightarrow & \mathbb P^{n+1} & \dashrightarrow & & \dashrightarrow & \mathbb P^4 & \dashrightarrow & \mathbb P^3 \\
\cup & & \cup & & & & \cup & & \cup \\
V & \supset & V_1 & \supset & \cdots & \supset & V_{n-2} & \supset & V_{n-1} \\
\downarrow & & \downarrow & & & & \downarrow & & \downarrow \\
\mathbb P^{n} & \dashrightarrow & \mathbb P^{n-1} & \dashrightarrow & & \dashrightarrow & \mathbb P^2 & \dashrightarrow & \mathbb P^1,  
\end{array}
\]
where $\dashrightarrow$ is a point projection, $\downarrow $ is a triple covering, $V_{n-2}$ and $V_{n-1}$ are a $K3$ surface and a sextic curve, respectively.  
\end{proposition}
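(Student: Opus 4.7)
The plan is to apply the Descending Procedure (Definition \ref{40}) iteratively to $V$, verifying at each step that the resulting divisor is again a $(2,3)$-complete intersection with the property $(\P_{n-i})$ and a $D_6$-embedding. The Procedure itself delivers properties (a) and (b) of the sequence (5), so the Galois group is automatically $D_6$ at every stage; the substantive work is to confirm (ii) and (iii) at each descent.

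For one descent, since $N=n+2$, the Galois subspace $W$ is a line in $\mathbb{P}^{n+2}$. A general hyperplane $W_1\subset\mathbb{P}^n$ pulls back through $\pi_W$ to a hyperplane $H\subset\mathbb{P}^{n+2}$ containing $W$, and $V_1=\pi^*(W_1)=V\ast H$. If $V$ is the intersection of a quadric $Q$ and a cubic $K$, then $V_1$ is cut out in $H\cong\mathbb{P}^{n+1}$ by $Q|_H$ and $K|_H$, so it is a $(2,3)$-complete intersection in $\mathbb{P}^{n+1}$; it is smooth by the smoothness clause in the Descending Procedure (which amounts to Bertini among the hyperplanes through $W$).

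To confirm $(\P_{n-1})$ on $V_1$, I would compute $D_1^{n-1}=D^{n-1}\cdot V_1=D^n=6$ using $V_1\sim D$, and apply the restriction sequence $0\to\mathcal{O}_V\to\mathcal{O}_V(D)\to\mathcal{O}_{V_1}(D_1)\to 0$. Because $V$ is a smooth $(2,3)$-complete intersection of dimension $n\ge 2$, Koszul cohomology yields $H^1(V,\mathcal{O}_V)=0$, so $h^0(V_1,\mathcal{O}(D_1))=h^0(V,\mathcal{O}(D))-1=n+2=(n-1)+3$. In particular $V_1\subset H\cong\mathbb{P}^{n+1}$ is embedded by the full linear system $|D_1|$, and Remark \ref{31} together with property (b) of sequence (5) upgrades this to a $D_6$-embedding of $V_1$.

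Iteration of the above descent produces the claimed chain, terminating at dimension one. The final two terms $V_{n-2}$ and $V_{n-1}$ are a smooth $(2,3)$-complete intersection surface in $\mathbb{P}^4$, which is a $K3$ surface (the canonical bundle is trivial by adjunction), and a smooth $(2,3)$-complete intersection curve in $\mathbb{P}^3$, which is a canonically embedded non-hyperelliptic curve of degree six and genus four, respectively. The main obstacle is to ensure that the complete-intersection structure and the correct dimension of global sections really propagate at every step; this is handled uniformly by the Bertini-plus-Koszul argument above, so once established for one descent the iteration proceeds without further difficulty.
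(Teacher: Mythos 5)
Your proposal is correct and matches the paper's own argument essentially step for step: both take $V_1=V\cap H$ for a general hyperplane $H$ through the Galois line, invoke Bertini for smoothness and irreducibility, note $V_1$ is again a $(2,3)$-complete intersection with $D_1^{n-1}=6$, and use the restriction sequence $0\to\mathcal O_V\to\mathcal O_V(V_1)\to\mathcal O_{V_1}(D_1)\to 0$ together with $\mathop{\mathrm{H^1}}\nolimits(V,\mathcal O_V)=0$ for complete intersections to get $\dim\mathop{\mathrm{H^0}}\nolimits(V_1,\mathcal O(D_1))=n+2$, then iterate via the Descending Procedure, which preserves the Galois group $D_6$. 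Your added remarks identifying $V_{n-2}$ as a $K3$ surface by adjunction and $V_{n-1}$ as the canonical genus-four sextic are consistent elaborations of what the paper states without further comment.
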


Here we present examples. 

\begin{example}\label{29}
Let $\Delta$ be the smooth cubic in $\mathbb P^3$ defined by 
\[
F(X_0, X_1, X_2, X_3)=X_0^3+X_1^3+X_2^3+X_0^2X_3+X_1X_3^2+X_3^3. \eqno(7)
\]
Let $\pi_P$ be the projection from $P=(0:0:0:1)$ to the hyperplane $\mathbb P^2$. 
Taking the affine coordinates $x=X_0/X_3, \ y=X_1/X_3$ and $z=X_2/X_3$, we get the defining equation of the affine part  
\[
f(x,y,z)=x^3+y^3+z^3+x^2+y+1. 
\]
Put $x=at,\ y=bt$ and $z=ct$. Computing the discriminant $D(f)$ of 
$f(at,bt,ct)=(a^3+b^3+c^3)t^3+a^2t^2+bt+1$ with respect to $t$, 
we obtain 
\[
\begin{array}{ccl}
D(f)&=&-(31a^6-18a^5b-a^4b^2+58a^3b^3-18a^2b^4\\
& & +31b^6+54a^3c^3-18a^2bc^3+58b^3c^3+27c^6). 
\end{array} \eqno(8)
\]
This yields the branch divisor of $\pi_P : \Delta \longrightarrow \mathbb P^2$. 
From $(7)$ and $(8)$ we infer that the defining equation of $2R_1$ is  
\[
F(X_0, X_1, X_2, X_3)=0 \ \mathrm{and} \ (X_0^2+2X_1X_3+3X_3^2)^2=0, 
\]
and that of $R_2$ is
\[
F(X_0, X_1, X_2, X_3)=0 \ \mathrm{and} \ 4X_0^2-X_1^2+2X_1X_3+3X_3^2=0.  
\]
It is not difficult to check that $R_2$ is smooth and irreducible, hence $P$ is a good point for $\Delta$.  
By taking a double covering along this curve \cite{to}, we get the $K3$ surface $\Delta_P$ in ${\mathbb P}^4$ defined by 
$F=0$ and $X_4^2=4X_0^2-X_1^2+2X_1X_3+3X_3^2$, which is a $(2,3)$-complete intersection. The Galois line is given by $X_0=X_1=X_2=0$. 
\end{example}

How is the Galois closure variety when the projection center is not a good point? 
Let us examine the following example. 

\begin{example}\label{30}
For a projection with some center $P \in \mathbb P^3 \setminus \Delta$, the Galois closure surface $\Delta_P$ 
is not necessarily a $K3$ surface.
Indeed, let $\Delta$ be the smooth cubic defined by 
\[
F(X_0, X_1, X_2, X_3)=X_0^3+X_1^3+X_2^3+X_0X_3^2-X_3^3. \eqno(9)
\]
Clearly the point $P=(0:0:0:1)$ is not a Galois one. 
Taking the same affine coordinates as in Example \ref{29}, we get the defining equation of the affine part    
\[
f(x,y,z)=x^3+y^3+z^3+x-1. 
\]
Put $x=at,\ y=bt$ and $z=ct$. Computing the discriminant $D(f)$ of 
$f(at,bt,ct)=(a^3+b^3+c^3)t^3+at-1$ with respect to $t$, 
we obtain 
\[
D(f)=-(31a^3+27b^3+27c^3)(a^3+b^3+c^3).  
 \eqno(10)
\]
This yields the branch divisor of $\pi_P : \Delta \longrightarrow \mathbb P^2$. 
From $(9)$ and $(10)$ we infer that the defining equation of $2R_1$ is  
 $C_1+C_2$, where $C_1$ (resp. $C_2$) is given by 
$X_0^3+X_1^3+X_2^3=0$ (resp. $31X_0^3+27X_1^3+27X_2^3=0$). 
Hence the defining equation of the sextic $C_P(V)$ is 
\[
(X_0^3+X_1^3+X_2^3)(31X_0^3+27X_1^3+27X_2^3)=0. 
\] 
Let $\Delta_P$ be the double covering of $\Delta$ branched along the 
divisor $R_2$, where $R_2=R_{21}+R_{22}$ such that 
$R_{21}$ (resp. $R_{22}$) is given by the intersection of $F=0$ and $X_0-X_3=0$ (resp. $F=0$ and $X_0-3X_3=0$). 
The $R_{2i}$ ($i=1,2$) is a smooth curve on $\Delta$ satisfying that $R_{2i}^2=3$, $R_2^2=12$ and 
$R_{21}.R_{22}=3$. 
We infer that $\Delta_P$ is a normal surface, therefore it is a Galois closure surface at $P$ (Definition \ref{1}). 
However, it has three singular points of type $A_1$, so that it is not a $K3$ surface.  
The minimal resolution of $\Delta_P$ turns out to be a $K3$ surface. 
\end{example}

\begin{remark}\label{47}
The variety with the property $(\P_n)$ is not necessarily a $(2,3)$-complete intersection. For example, in case $n=1$, 
Take $V=C$ as the Galois closure curve of a smooth cubic $\Delta \subset \mathbb P^2$ obtained as follows: 
let $T$ be a tangent line to $\Delta$ at a flex.  Choose a point $P \in T$ satisfying the following condition:
if $\ell_P$ is a line passing through $P$ and $\ell_P \ne T$, then $\ell_P$ does not tangent to $\Delta$ at any flex. 
Let $C$ be the Galois closure curve for the point projection $\pi_P : \Delta \longrightarrow P^1$, i.e., $\widetilde{\pi} : C \longrightarrow \Delta$ is a double covering, which has four branch points (see, for example \cite[pp. 287--288]{my}), hence $g(C)=3$. 
Let $D$ be the divisor ${\widetilde{\pi}}^*(\ell \ast \Delta)$, where $\ell$ is a line passing through $P$. Clearly we have $\deg D=6$, the complete linear system $|D|$ has no base point 
and $\dim {\mathop{\mathrm{H^0}}\nolimits}(C, {\mathcal O}(D))=4$. 
Let $f : C \longrightarrow C'$ be the morphism associated with $|D|$. 
The double covering $\widetilde{\pi}$ factors as $\widetilde{\pi}={\widetilde{\pi}}' \cdot f$, where ${\widetilde{\pi}}' : C' \longrightarrow \Delta$ is a restriction of the projection 
$\mathbb P^3 \dashrightarrow \mathbb P^2$. 
Since $g(C') \geq 1$, we see $\deg C' \ne 2$ and $3$. Hence $\deg C'=6$ and $f$ is a birational morphism. Further, we have the 
projection ${\widetilde{\pi}}' : C' \longrightarrow \Delta$ and $\Delta$ is nonsingular, hence $C'$ is smooth. Therefore $f$ is an isomorphism. 
Since $g(C)=3$, $C$ is not a $(2,3)$-complete intersection. 
\end{remark}

\section{proof}
First we prove Theorem \ref{100}. 
The case $n=1$ have been proved (\cite{y5}). So that we will restrict ourselves to the case $n \geq 2$. 

Since $V$ is embedded into $\mathbb P^{n+2}$ associated with $|D|$, where $D$ is a very ample divisor with $D^n=6$, we can apply the results in Section 2. 
By assumption $V$ has a Galois line $\ell$ such that the Galois group $G=G_{\ell}$ is isomorphic to $D_6$. 
We can assume $G=\langle \sigma, \tau \rangle$ where $\sigma^3=\tau^2=1$ and $\tau \sigma \tau =\sigma^{-1}$.  
Let $\rho_1 : V \longrightarrow V^{\tau}=V/\langle \tau \rangle$. We see $\rho_2 : V^{\tau} \longrightarrow V^{\tau}/G \cong  \mathbb P^n$ 
turns out a morphism. 
Then, we have $\pi=\rho_2 \rho_1 : V \longrightarrow V/G \cong \mathbb P^{n}$. 
Note that $\rho_2$ is a non-Galois triple covering. 
By taking suitable coordinates, we can assume $\ell$ is given by $X_0=X_1= \cdots = X_n=0$. 
As we see in Section 2, we have the representation $\beta : G \hookrightarrow PGL(n+3,k)$. Since the characteristic of $k$ is zero,
 the projective representation is completely reducible, hence  
$\beta(\sigma)$ and $\beta(\tau)$ can be represented as 
\[
{\bf 1}_{n+1} \oplus M_2(\sigma) \ \ \mathrm{and} \ \ {\bf 1}_{n+1} \oplus M_2(\tau),   
\]
respectively, where $M_2(\sigma)$ and $M_2(\tau)$ are in $GL(2,k)$. 
Since $G \cong D_6$, we have the representation   
\newfont{\bg}{cmr10 scaled\magstep4}
\newcommand{\bigzerol}{\smash{\hbox{\bg 0}}}
\newcommand{\bigzerou}{\smash{\lower1.7ex\hbox{\bg 0}}}
\[\beta(\sigma)= \left(
\begin{array}{ccccc}
1 & & & & \bigzerou \\
 & \ddots & & & \\
 & & 1 & & \\ 
 & & & -\frac{1}{2} & \omega+\frac{1}{2} \\
 \bigzerol& & & \omega+\frac{1}{2} & -\frac{1}{2}
\end{array}
\right) \ \ \mathrm{and} \ \ 
\beta(\tau) = 
\left( 
\begin{array}{ccccc}
1 & & & & \bigzerou \\
 & \ddots & & & \\
 & & \ddots & & \\ 
 & & & 1 &  \\
\bigzerol & & &  & -1
\end{array}
\right), 
\]
where $\omega$ is a primitive cubic root of $1$. 
Therefore, the fixed locus of $\tau$ is given by $f(V) \cap H$, where $H$ is the hyperplane defined by $X_{n+2}=0$. 
Put $Z=f(V) \cap H$, i.e., $Z \sim D$. Since $Z$ is ample, it is connected. 
Looking at the representation $\beta(\tau)$, we see $Z$ is smooth, hence it is a smooth irreducible variety.    
Take the point $P=(0: \cdots :0:1) \in \ell$ and an arbitrary point $Q$ in $V$. 
Let $\ell_{PQ}$ be the line passing through $P$ and $Q$. 
Then we have $\tau(\ell_{PQ})=\ell_{PQ}$ and $\tau(V)=V$. 
Let $\pi_P$ be the projection from the point $P$ to the hyperplane $H$. Since $Z$ is smooth, $\pi_P(V)$ is smooth. 
 By Lemma \ref{49}, $\pi_P(V)$ is isomorphic to $V/\langle \tau \rangle$ and we may assume $\pi_P = \rho_1$.   
Therefore we see $V$ is contained in the cone consisting of the lines passing through $P$ and the points in 
$V$. 
Since $\deg V = 6$ and $\deg p = 2$, we conclude the variety $V^{\tau}$ is a smooth cubic in $\mathbb P^{n+1}$. 
This proves Theorem \ref{100}. 

\medskip

Next we prove Lemma \ref{42}.   
Let $H_2$ be a linear variety of dimension two and passing through $P$. 
If $H_2$ is general, then $\Delta \cap H_2$ is a smooth cubic in the plane $H_2 \cong \mathbb P^2$. 
Thus $C_P(\Delta) \cap H_2$ consists of six lines, hence we have $\deg C_P(\Delta)=6$. 

\medskip

The proof of Proposition \ref{44} is as follows. 
Suppose $P$ is a general point for $\Delta$ and let $\pi_P$ be the projection from $P$ to 
the hyperplane $\mathbb P^n$. 
Put $B=\pi_P(R_2)$. 

\begin{claim}\label{102}
The divisor $B$ is irreducible. 
\end{claim}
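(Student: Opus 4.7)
The plan is to realize the branch divisor $B$ as the image under $\pi_P$ of a smooth complete intersection sitting inside $\Delta$, and then transfer irreducibility across a finite map.

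First I would identify the tangency locus $R_1 \subset \Delta$ (the locus of doubled points $P_1$ arising from lines $\ell \in \Sigma_P$) with a polar intersection. Writing $F$ for the cubic form defining $\Delta$, a point $P_1$ lies on $R_1$ precisely when $\ell_{PP_1}$ is tangent to $\Delta$ at $P_1$, i.e.\ $P \in T_{P_1}\Delta$. Since the tangent hyperplane $T_{P_1}\Delta$ has equation $\sum_i \partial_i F(P_1)\,X_i = 0$, this translates into $\sum_i P_i\,\partial_i F(P_1)=0$, which is the polar quadric $Q_P$ of $\Delta$ at $P$. Hence $R_1 = \Delta \cap Q_P$.

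Next I would prove that $R_1$ is irreducible for general $P$. Since $\Delta$ is smooth, the partials $\partial_i F$ have no common zero in $\mathbb{P}^{n+1}$, so the polar linear system $\{Q_P\}$ is base-point free on $\mathbb{P}^{n+1}$, and a fortiori on $\Delta$. By Bertini, for general $P$ both $Q_P$ and $R_1 = \Delta \cap Q_P$ are smooth. Being a smooth $(2,3)$-complete intersection of positive dimension $n-1 \geq 1$ in $\mathbb{P}^{n+1}$, $R_1$ is connected by the classical connectedness theorem for complete intersections of ample hypersurfaces, hence irreducible.

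Finally I would push this down to $B$. The assignment sending a tangent point $P_1 \in R_1$ to the residual point $P_2$ on the same line $\ell_{PP_1} \in \Sigma_P$ gives a birational correspondence $R_1 \dashrightarrow R_2$, so $\pi_P(R_1)=\pi_P(R_2)=B$. Since $\pi_P:\Delta\longrightarrow\mathbb{P}^n$ is finite, its restriction $\pi_P|_{R_1}: R_1 \longrightarrow B$ is finite and surjective, and the image of an irreducible variety under a morphism is irreducible; therefore $B$ is irreducible.

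The main obstacle is Step~2, verifying that for general $P$ the polar quadric $Q_P$ is genuinely smooth and that the intersection $\Delta \cap Q_P$ is both smooth and connected. Smoothness reduces cleanly to Bertini once the base-point freeness of the polar system on $\Delta$ is checked (which uses the smoothness of $\Delta$), while connectedness requires the standard theorem for positive-dimensional ample complete intersections. This is why the argument works naturally in the regime $n\ge 2$ relevant here, the case $n=1$ being handled separately in \cite{y5}.
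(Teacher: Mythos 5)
Your proof is correct, but it takes a genuinely different route from the paper's. The paper argues algebraically via the discriminant: writing $g(u_1,\ldots,u_{n+1},t_0,\ldots,t_n,x)=f(u_1+xt_0,\ldots,u_{n+1}+xt_n)$ with the projection center $u$ kept \emph{variable}, it invokes \cite[Lemma 3]{y1} and \cite[Claim 1]{y3} to conclude that the discriminant $D(u,t)$ is reduced and irreducible as a polynomial in all the variables, and then specializes $u$ to a general point $a$ to get irreducibility of $D(a,t)$, whose zero locus is $B$. You instead identify the tangency locus $R_1$ with the polar intersection $\Delta\cap Q_P$, establish smoothness of $R_1$ for general $P$ by Bertini (the polar system being base-point free precisely because $\Delta$ is smooth), get connectedness from the classical theorem on positive-dimensional complete intersections, and push irreducibility forward along the finite morphism $\pi_P$ using $\pi_P(R_1)=\pi_P(R_2)=B$ --- for which you only need equality of images, so the remark about a birational correspondence $R_1\dashrightarrow R_2$ is dispensable. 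Your geometric argument is self-contained (no appeal to the earlier papers), yields smoothness and irreducibility of $R_1$ as a byproduct, and makes transparent both why generality of $P$ is needed and why $n=1$ is excluded (connectedness fails when $\dim R_1=0$); it is consistent with Example \ref{30}, where a special center produces a reducible branch divisor because the corresponding polar section degenerates. What the paper's route buys instead is information about the defining \emph{equation} of $B$ --- irreducibility and reducedness of the discriminant form itself --- which is what is later exploited in the proof of Theorem \ref{101} to factor $\pi_P^*(\Phi)$ as $\Phi_1^2\cdot\Phi_2$; note also that its specialization step implicitly rests on a Bertini--Noether-type statement supplied by the cited lemmas, whereas your Bertini application is entirely standard. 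For Claim \ref{102} itself, and for its use in Claim \ref{104}, your set-theoretic irreducibility is fully sufficient.
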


\begin{proof}
It is sufficient to check in a general affine part. 
Put $x_i=X_i/X_0 \ (i=1, \ldots, n+1)$ and let $f(x_1, \ldots, x_{n+1})$ be the defining equation of an affine part 
$X_0 \ne 0$ of $\Delta$ 
and $P=(u_1, \ldots, u_{n+1}) \in \mathbb A^{n+1}$. Put 
\[
g(u_1, \ldots, u_{n+1},t_0, \ldots, t_n, x)=f(u_1+xt_0, \ldots,  u_{n+1}+xt_n),  
\]
where $(t_0, \ldots, t_n) \in \mathbb P^n$. 
Let $D(g)=D(u_1, \ldots, u_{n+1}, t_0, \ldots, t_{n})$ be the discriminant of $g$ with respect to $x$. 
Owing to \cite[Lemma 3]{y1} and \cite[Claim 1]{y3}, we see $D(g)$ is reduced and irreducible. 
Therefore for a general value $u_1=a_1, \ldots, u_{n+1}=a_{n+1}$, $D(a_1, \ldots, a_{n+1}, t_0, \ldots, t_n)$ 
is irreducible. This implies $B$ is irreducible.  
\end{proof}

\begin{claim}\label{104}
The divisor $R_2$ is irreducible and smooth. 
\end{claim}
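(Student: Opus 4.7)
The plan is as follows. First I would analyse the restricted projection $\pi_P|_{R_2}: R_2 \to B$ onto the branch divisor $B = \pi_P(R_2)$ and show it is a finite birational morphism. Finiteness comes for free, since $\pi_P: \Delta \to \mathbb P^n$ is finite of degree three (as $P \notin \Delta$) and $R_2 \subset \Delta$ is closed. For birationality, over a general $b \in B$ the fibre $\ell_b \ast \Delta$ equals $2P_1 + P_2$ with $P_1 \neq P_2$, so the unique preimage of $b$ inside $R_2$ is the single point $P_2$.

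Combining this with Claim \ref{102} yields irreducibility of $R_2$: every irreducible component of $R_2$ is a divisor on $\Delta$ of dimension $n-1$ and, having finite fibres under $\pi_P$, must dominate an $(n-1)$-dimensional closed subvariety of $\mathbb P^n$; since the generic fibre over $B$ contains only one point of $R_2$, at most one component can dominate $B$, forcing $R_2$ to have a single component.

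For smoothness I would make the description of $R_2$ more explicit. A short calculation with the symmetric trilinear form polarising the defining cubic $F$ of $\Delta$ (parametrise the line through $P$ and $Q \in \Delta$ as $sP + tQ$, divide $F(sP+tQ)$ by the factor $s$ that vanishes because $F(Q)=0$, and set the discriminant of the remaining quadratic in $(s:t)$ to zero) yields
\[
R_2 = \Delta \cap \{\,Q \in \mathbb P^{n+1} : 3 F(P,P,Q)^2 - 4 F(P) F(P,Q,Q) = 0\,\},
\]
exhibiting $R_2$ as a complete intersection of a cubic and a quadric (degrees match the expected $R_2 \sim 2H$ on $\Delta$). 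Smoothness of this intersection is an open condition on $P \in \mathbb P^{n+1} \setminus \Delta$, so it suffices to produce one $P$ where $R_2$ is smooth. The calculation in Example \ref{29} supplies such a $P$ for $n=2$; for higher $n$ a Fermat-type explicit computation, or a dimension count on the universal incidence variety
\[
\mathcal{J} = \{(P, q) \in (\mathbb P^{n+1}\setminus \Delta) \times \Delta : q \in \mathrm{Sing}(R_2(P))\},
\]
would show the projection of $\mathcal{J}$ to $\mathbb P^{n+1}$ is a proper closed subset.

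The main obstacle I anticipate is this last nonemptiness step: openness of smoothness is routine, but producing a smooth example in arbitrary dimension either needs an explicit cubic whose quadric $3F(P,P,Q)^2 - 4F(P)F(P,Q,Q)$ cuts $\Delta$ transversely, or an infinitesimal analysis of $\mathcal{J}$ that tracks tangency conditions at both $P_1$ and $P_2$ on $\Delta$. Once that is in place, combining with the birational surjection $R_2 \to B$ (via Zariski's Main Theorem, if one wishes to transfer smoothness to $B$ as well) completes the proof of both assertions of Claim \ref{104}.
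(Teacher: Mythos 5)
Your irreducibility argument is essentially the paper's own: both rest on Claim \ref{102} together with the observation that a general line $\ell \in \Sigma_P$ has $\ell \ast \Delta = 2P_1+P_2$ with $P_1 \ne P_2$, so the fibre of $\pi_P|_{R_2}$ over a general point of $B$ is the single residual point, forcing $r=1$ components. Your explicit description of $R_2$ via the polarized cubic is also correct --- the discriminant quadric $3F(P,P,Q)^2-4F(P)F(P,Q,Q)$ reproduces exactly the quadric $4X_0^2-X_1^2+2X_1X_3+3X_3^2$ of Example \ref{29}, and is consistent with the step $\deg\Phi_2=2$ in the paper's proof of Theorem \ref{101}.

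The smoothness half, however, has a genuine gap, and you have named it yourself: you reduce to ``openness of smoothness in $P$ plus one smooth example,'' and then never produce the example in arbitrary dimension. Worse, the fallback you suggest cannot close the gap as stated. Claim \ref{104} is asserted, inside Proposition \ref{44}, for an \emph{arbitrary fixed} smooth cubic $\Delta$ and general $P$; a Fermat-type explicit computation certifies only one special cubic, and even joint openness in the pair $(\Delta,P)$ over the irreducible family of smooth cubics would only give the conclusion for \emph{general} $\Delta$, since the good locus could a priori miss the fibre over a particular $\Delta_0$. The route that would work for fixed $\Delta$ --- the dimension count on your incidence variety $\mathcal J$, tracking tangency at both $P_1$ and $P_2$ --- is precisely the part you defer. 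The paper avoids all of this with a direct local analysis at each point of $R_2$: if $P_1 \ne P_2$, the line $\ell$ crosses $\Delta$ transversally (normal crossing) at $P_2$, so $R_2$ is smooth there; if $P_1=P_2$ (inflectional line, local intersection number $3$), the cycle decomposition $\Delta \ast C_P(\Delta)=2R_1+R_2$ together with $P_1 \in R_1$ leaves multiplicity exactly one for $R_2$ at $P_1$, whence smoothness there as well. Supplying an argument of this local type (or actually carrying out the infinitesimal analysis of $\mathcal J$ for a fixed $\Delta$) is what your proposal still needs.
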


\begin{proof}
Suppose $R_2$ is decomposed into irreducible components $R_{21}+ \cdots +R_{2r}$. 
Since $B$ is irreducible, we have $\pi_P(R_{2i})=B$ for each $1 \le i \le r$. 
However, since $\Delta \ast \ell$ has an expression $2P_1+P_2$, the $r$ must be $1$. 
Thus $R_2$ is irreducible. 
Since $\Delta \ast \ell$ has an expression $2P_1+P_2$, where $P_i \in \Delta \ (i=1,2)$, 
$\Delta$ and $\ell$ has a normal crossing at $P_2$ if $P_1 \ne P_2$. In case $P_1=P_2$,  
the intersection number of $\Delta$ and $\ell$ at $P_1$ is three. Since $R_1 \ni P_1 $ and $\Delta \ast C_P(\Delta)=2R_1+R_2$, we see that 
$R_2$ is smooth at $P_1$. 
\end{proof}

This completes the proof of Proposition \ref{44}. 
The proof of Theorem \ref{101} is as follows.

First note that $P$ is not a Galois point. So we consider the Galois closure variety. 
The ramification divisor of $\pi_P : \Delta \longrightarrow \mathbb P^n $ is $2R_1+R_2$. 
The divisor $R_2$ is smooth and irreducible by assumption. 
Let $\Phi$ be the equation of the branch divisor of $\pi_P$. 
As we see in Example \ref{29} ($a=X_0/tX_3,\ b=X_1/tX_3,\ c=X_2/tX_3$) the discriminant is given by the homogeneous equation of $X_0, \ldots, X_n$, 
hence we infer that $\pi_P^*(\Phi)$ has the expression as $\Phi_1^2 \cdot \Phi_2$, where $\Phi_2=0$ defines $R_2$. 
Since $\deg \Phi_2=2$, we can define the variety in $\mathbb P^{n+2}$ by 
$F=0$ and $X_{n+2}^2=\Phi_2$, which is smooth and turns out to be the Galois closure variety. This proves Theorem \ref{101}.

We go to the proof of Proposition \ref{46}. 
Let $H$ be a general hyperplane containing the Galois line $\ell$ for $V$ in Theorem \ref{100}. 
Put $V_1=V \cap H$ and $D_1=D \cap H$. 
Since we are assuming $n \geq 2$, the $V_1$ is irreducible and nonsingular by Bertini's theorem. 
Thus, we have $\dim V_1=n-1$, $D_1^{n-1}=6$ and $V_1$ is also a smooth $(2,3)$-complete intersection. 
Note that $V_1 \sim D$ on $V$. Thus we have the exact sequence of sheaves 
\[
0 \longrightarrow \mathcal O_V \longrightarrow \mathcal O_V(V_1) \longrightarrow \mathcal O_{V_1}(D_1) \longrightarrow 0.  
\]
Taking cohomology, we get a long exact sequence 
\[
0 \longrightarrow {\mathop{\mathrm{H^0}}\nolimits}(V,\ {\mathcal O}_V) \longrightarrow {\mathop{\mathrm{H^0}}\nolimits}(V,\ {\mathcal O}_V(V_1)) \longrightarrow {\mathop{\mathrm{H^0}}\nolimits}(V_1,\ {\mathcal O}_{V_1}(D_1)) \]
$
\hspace{2.3cm} \longrightarrow {\mathop{\mathrm{H^1}}\nolimits}(V,\ {\mathcal O}_V) \longrightarrow \cdots.   
$  

\noindent Since $V$ is the complete intersection, we have ${\mathop{\mathrm{H^1}}\nolimits}(V,\ {\mathcal O}_V)=0$ (cf. \cite[III, Ex. 5.5]{har}). 
Then $V_1$ has the same properties as $V$ does, i.e., $\dim V_1=n-1$, $D_1^{n-1}=6$, 
 $\dim{\mathop{\mathrm{H^0}}\nolimits}(V_1,\ {\mathcal O}(D_1))=n+2$ and $\ell$ is a Galois line for $V_1$ and the Galois group is isomorphic to $D_6$.
Continuing the Descending Procedure, we get the sequence of Proposition \ref{46}. 

\medskip

There are a lot of problems concerning our theme, we pick up some of them.

\bigskip 

\noindent{\bf Problems.}

\begin{enumerate}
\item For each finite subgroup $G$ of $GL(2, k)$, does there exist a pair $(V, D)$ which defines the Galois embedding with the 
Galois group $G$ such that $D^n=|G|$, $\dim V=n$ and $\dim{\mathop{\mathrm{H^0}}\nolimits}(V,\ {\mathcal O}(D))=n+3$?
\item How many Galois subspaces do there exist for one Galois embedding? 
In case a smooth hypersurface $V$ in $\mathbb P^{n+1}$, there exist at most $n+2$. Further, it is $n+2$ if and only if 
$V$ is Fermat variety \cite{y3}.   
\item Does there exist a variety $V$ on which there exist two divisors $D_i$ ($i=1, 2$) such that they  
give Galois embeddings and $D_1^n \ne D_2^n$?  
\end{enumerate}

\bigskip 

For the detail, please visit our website 

http://mathweb.sc.niigata-u.ac.jp/~yosihara/openquestion.html

\bigskip

\bibliographystyle{amsplain}

\end{document}